\newtheorem{proposition}{Proposition}
\newtheorem{remark}{Remark}
\newcommand{\R}{\mathbb{R}}
\title{Extreme and statistical properties of eigenvalue indices of simple connected  graphs}
\author{So\v{n}a Pavl\'{\i}kov\'a
\thanks{Alexander Dub\v{c}ek University of Tren\v{c}\'{\i}n, Slovakia}
\and
Daniel \v{S}ev\v{c}ovi\v{c} \thanks{Comenius University in Bratislava, Slovakia}
\and
Jozef \v{S}ir\'a\v{n} \thanks{Slovak Technical University in Bratislava, Slovakia}
}
\begin{document}

\maketitle

\begin{abstract}

We analyze graphs attaining the extreme values of various spectral indices in the class of all simple connected graphs, as well as in the class of graphs which are not complete multipartite graphs. We also present results on density of spectral gap indices and its nonpersistency with respect to small perturbations of the underlying graph. We show that a small change in the set set of edges may result in a significant change of the spectral index like, e.g., the spectral gap or spectral index. We also present a statistical and numerical analysis of spectral indices of graphs of the order $m\le 10$. We analyze the extreme values for spectral indices for graphs and their small perturbations. Finally, we present the statistical and extreme properties of graphs on $m\le 10$ vertices. 

\medskip
\noindent Keywords: Graph spectrum; spectral index; extreme properties of eigenvalues; distribution of eigenvalues; complete multipartite graphs;

\smallskip
\noindent 2000 MSC: 05C50 05B20 05C22 15A09 15A18 15B36 

\end{abstract}

\section{Introduction}
In theoretical chemistry, biology, or statistics, spectral indices and properties of graphs representing the structure of chemical molecules or transition diagrams for finite Markov chains play an important role (cf. Cvetkovi\'c \cite{Cvetkovic1988,Cvetkovic2004}, Brouwer and Haemers \cite{Brouwer2012} and references therein). In the past decades, various graph energies and indices have been proposed and analyzed. For example, the sum of absolute values of eigenvalues is called the matching energy index (cf. Chen and Liu \cite{Lin2016}), the maximum of the absolute values of the least positive and largest negative eigenvalue is related to the HOMO-LUMO index (see Mohar \cite{Mohar2013,Mohar2015}, Li {et al.} \cite{Li2013}, Jakli\'c  {et al.} \cite{Jaklic2012}, Fowler {et al.} \cite{Fowler2010}), their difference is related to the HOMO-LUMO separation gap (cf.  Gutman and Rouvray \cite{Gutman1979}, Li {et al.} \cite{Li2013}, Zhang and An \cite{Zhang2002}, Fowler  {et al.} \cite{Fowler2001}). 

The spectrum $\sigma(G_A)\equiv\sigma(A)$ of a simple nonoriented connected graph $G_A$ on $m$ vertices is given by the eigenvalues of its adjacency matrix $A$:
\[
\lambda_{max}\equiv\lambda_1 \ge \lambda_2 \ge \dots \ge \lambda_m\equiv\lambda_{min}.
\]
For a simple graph (without loops and multiple edges) we have $A_{ii}=0$, and so $\sum_{i=1}^m\lambda_i = trace(A) =0$. Hence $\lambda_1>0, \lambda_m<0$. 

In what follows, we shall denote $\lambda_+(A)$, and $\lambda_-(A)$ the least positive and largest negative eigenvalues of a symmetric matrix $A$ having positive and negative real eigenvalues. 
Let us denote by $\Lambda^{gap}(A)= \lambda_+(A) -  \lambda_-(A)$ and $\Lambda^{ind}(A)= \max(|\lambda_+(A)|, |\lambda_-(A)|)$ the spectral gap and the spectral index of a symmetric matrix $A$. Furthermore, we define the spectral power $\Lambda^{pow}(A) = \sum_{k=1}^m |\lambda_k|$. Clearly, all three  indices $\Lambda^{gap}, \Lambda^{ind}$, and $\Lambda^{pow}$ depend on positive $\sigma_+(A)=\{\lambda \in \sigma(A), \lambda>0\}$, and negative $\sigma_-(A)=\{\lambda \in \sigma(A), \lambda<0\}$ parts of the spectrum of the matrix $A$. In fact, $\lambda_+(A) = \min\sigma_+(A),\  \lambda_-(A) = \max\sigma_-(A)$, and $\Lambda^{pow}(A) = \sum_{\lambda\in\sigma_+(A)} \lambda - \sum_{\lambda\in\sigma_-(A)} \lambda$.

In the past decades, various concepts of introducing inverses of graphs based on inversion of the adjacency matrix have been proposed. 
In general, the inverse of the adjacency matrix does not need to define a graph again because it may contain negative elements (cf. \cite{Har}).
Godsil \cite{Godsil1985} proposed a successful approach to overcome this difficulty, which defined a graph to be (positively) invertible if the inverse of its nonsingular adjacency matrix is diagonally similar (cf. \cite{Zas}) to a nonnegative integral matrix representing the adjacency matrix of the inverse graph in which positive labels determine edge multiplicities. In the papers \cite {Pavlikova2016, Pavlikova2022-LAA},  Pavl\'\i kov\'a and \v{S}ev\v{c}ovi\v{c} extended this notion to a wider class of graphs by introducing the concept of negative invertibility of a graph. 

In chemical applications, the spectral gap $\Lambda^{gap}$ of a structural graph of a molecule is related to the so-called HOMO-LUMO energy separation gap of the energy of the highest occupied molecular orbital (HOMO) and the lowest unoccupied molecular orbital (LUMO).  Following Hückel’s molecular orbital method \cite{Huckel1931}, eigenvalues of a graph that describes an organic molecule are related to the energies of molecular orbitals (see also Streitwieser \cite[Chapter 5.1]{Streitwieser1961}). 
Finally, according Aihara \cite{Aihara1999JCP, Aihara1999TCH},  it is energetically unfavorable to add electrons to a high-lying LUMO orbital. Hence, a larger HOMO-LUMO gap implies a higher kinetic stability and low chemical reactivity of a molecule. Furthermore, the HOMO-LUMO energy gap generally decreases with the number of vertices in the structural graph (cf. \cite{Bacalis2009}). 

In this paper, we analyze the extreme and statistical properties of the spectrum of all simple connected graphs. It includes the analysis of maximal and minimal eigenvalues, as well as  indices such as, e.g., spectral gap, spectral index, and the power of spectrum. We analyze graphs that attain extreme values of various  indices in the class of all simple connected graphs, as well as in the class of graphs that are not complete multipartite graphs. We also present results on the density of spectral gap indices and its nonpersistency with respect to small perturbations of the underlying graph. We show that a small change in the set set of edges may result in a significant change of the spectral gap or spectral index. We also present a statistical and numerical analysis of  indices of graphs of order $m\le 10$. 

The paper is organized as follows. In Section 2 we first recall the known results on extreme values of maximal and minimal eigenvalues of adjacency matrices. We also report the number of all simple connected graphs due to McKay \cite{McKay}. Next, we analyze the extreme values for  indices for completed multipartite graphs and their small perturbations. In Section 3 we focus our attention on the statistical and extreme properties of graphs on $m\le 10$ vertices.

\section{Extreme properties of  indices}

\begin{figure}
    \centering
    \includegraphics[width=.35\textwidth]{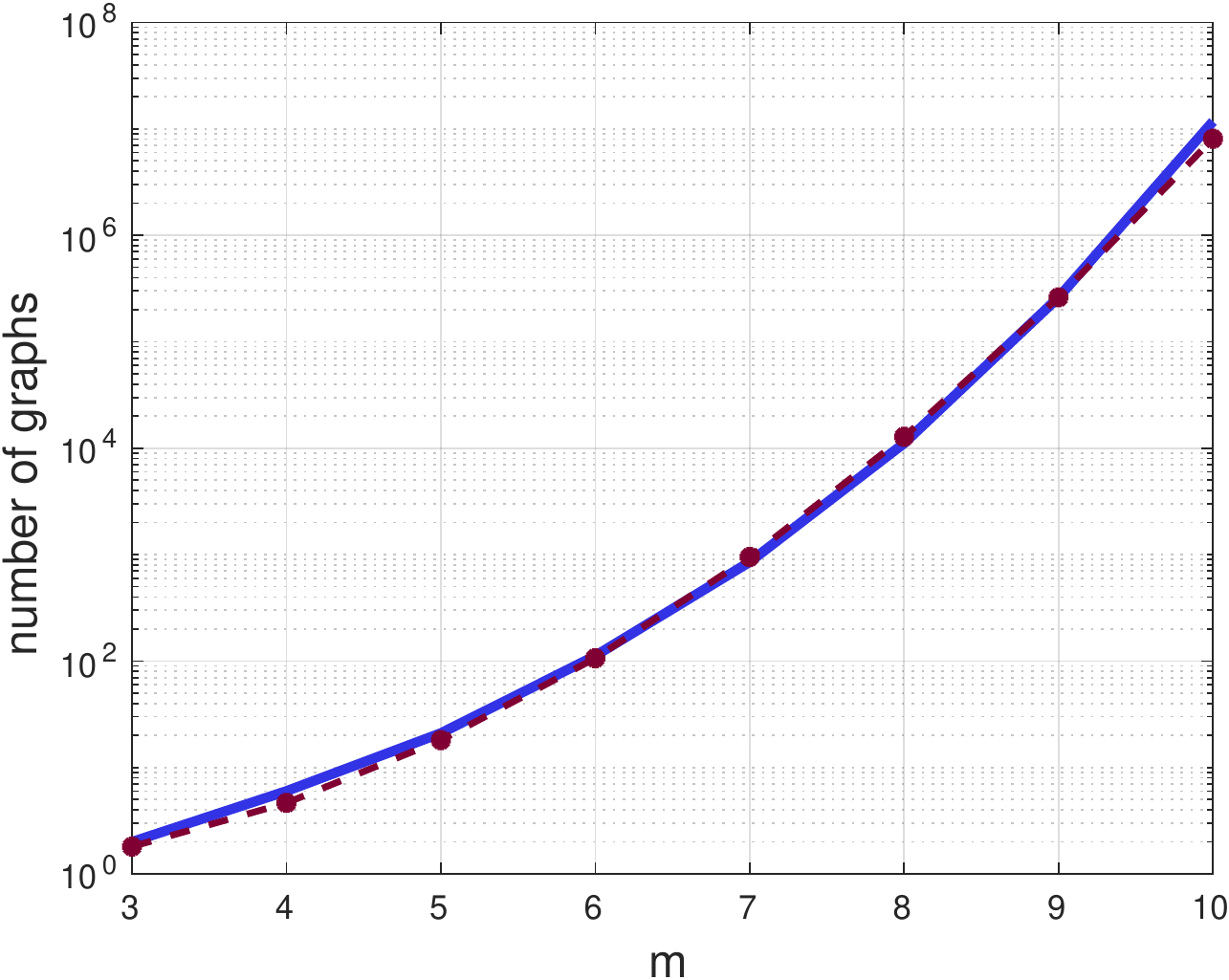} 
    
    \caption{\small The numbers $c_m$ of all simple connected as a function of number of vertices (blue solid line), and its approximation by means of the approximation formula (\ref{approxformula}) (red dashed line).} 
    \label{fig-graf-numbers-approximate}
\end{figure}

Denote by $c_m$ the number of simple non-isomorphic connected graphs on $m$ vertices. According to the McKay's list of all simple connected graphs \cite{McKay} the numbers $c_m, m\le 10$, are summarized in Table~\ref{tab-number}.

\begin{table}
\caption{\small Numbers of all simple connected graphs on $m\le 10$ vertices.}
\small
\begin{center}
\hglue -0.5truecm \begin{tabular}{l||l|l|l|l|l|l|l|l|l}
\hline
$m$  & $2$ & $3$ & $4$ & $5$ & $6$ & $7$ & $8$ & $9$ & $10$\\
\hline\hline
total \# & $1$   & $2$   & $6$   &  $21$ & $112$ & $853$ &$11117$&$261080$& $11716571$ 
\\
\hline
\end{tabular}
\end{center}
\label{tab-number}
\end{table}
Although there exists an approximation formula for the number of labelled simple connected graphs of the given order $m$ and number of edges (cf. Bender, Canfield, and McKay \cite{Bender1990}) for small values of $m$ the number $c_m$ can be approximated by the following compact the quadratic exponential function:
\begin{equation}
c_m\approx \omega_0 10^{\omega_1 (m-9)+\omega_2 (m-9)^2}, \quad \text{where} \ \omega_0=261080, \ \ \omega_1=1.4, \ \ \omega_2=0.09.
\label{approxformula}
\end{equation}
This formula is exact for $m=9$ and gives good approximation results for other orders $m\le10$ (see Fig.~\ref{fig-graf-numbers-approximate}).

Recall the following well-known facts: the maximal value of $\lambda_{max}=\lambda_1$ over all simple connected graphs on the $m$ vertices is equal to $m-1$, and it is attained by the complete graph $K_m$. The minimal value of $\lambda_{max}$ is equal to $2\cos(\pi/(m+1))$, and it is attained for the path graph $P_m$. Furthermore, the lower bound for the minimal eigenvalue $\lambda_{min}=\lambda_m \ge -\sqrt{\lfloor m/2\rfloor\lceil m/2\rceil}$ was independently proved in \cite{Con,Hon,Pow}. The lower bound is attained for the complete bipartite graph $K_{m_1,m_2}$ where $m_1=\lceil m/2\rceil, m_2=\lfloor m/2\rfloor$. The maximum value of $\lambda_{min}$ on all simple connected graphs on the $m$ vertices is equal to $-1$, and it is attained for the complete graph $K_m$.

\subsection{ Indices for complete multipartite graphs and their perturbations}

The aim of this section is to analyze  indices and their extreme values for simple connected graphs on the $m$ vertices. 

\begin{proposition}
\label{spectrum-completemultipartitioned}
Let us denote $K_{m_1, \dots, m_k}$ the complete multipartite graph where $1\le m_1\le \dots \le m_k$ denote the sizes of parts, $m_1+ \dots + m_k =m$, and $k\ge2$ is the number of parts. Then the spectrum of the adjacency matrix $A$ of $K_{m_1, \dots, m_k}$ satisfies $\sigma(A) \subseteq [-m_k, m - m/k]$. If $m_i<m_{i+1}$ then there exists a single eigenvalue $\lambda\in (-m_{i+1}, -m_i)$. If $m_i=m_{i+1}= \dots = m_{i+j}$ then $\lambda=-m_i$ is an eigenvalue of $A$ with multiplicity $j$.

Finally, $0<\lambda_+(A)\le m-m/k$ and $-m/k\le \lambda_-(A) <0$. As a consequence, $\Lambda^{gap}(A)\le m, \Lambda^{ind}(A)\le m-1$,  $\Lambda^{pow}(A)\le 2(m-m/k)$. The equalities for the indices $\Lambda^{gap}(A),  \Lambda^{ind}(A)$ are reached by the complete graph $G_A=K_m$.
\end{proposition}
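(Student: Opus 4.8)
The plan is to compute the spectrum of $K_{m_1,\dots,m_k}$ more or less explicitly using the block structure of its adjacency matrix, and then read off all the claimed bounds. First I would observe that, after grouping vertices by parts, the adjacency matrix $A$ has the block form $A = J_m - \mathrm{diag}(J_{m_1},\dots,J_{m_k})$, where $J_n$ denotes the $n\times n$ all-ones matrix and $J_m$ the $m\times m$ all-ones matrix. Equivalently, $A$ is the Kronecker-type sum built from the $J_{m_i}$ blocks with off-diagonal blocks all equal to $1$. The key structural fact is that $A$ acts in a very restricted way: any vector that is constant on each part is mapped to another such vector, and any vector that sums to zero on each part lies in the kernel of every $J_{m_i}$ block. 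This decomposes $\R^m$ into the $k$-dimensional ``quotient'' space $V_q$ of part-constant vectors and its orthogonal complement $V_0$ of dimension $m-k$.

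On $V_0$, each $J_{m_i}$ acts as zero, so $A$ restricted to $V_0$ equals $J_m$ restricted to $V_0$; but any vector that is zero-sum on each part is in particular zero-sum overall, hence also annihilated by $J_m$. Therefore $V_0 \subseteq \ker A$, and this already accounts for an eigenvalue $0$ with multiplicity $\ge m-k$. Within $V_0$, for any block of equal part-sizes $m_i = \dots = m_{i+j}$, I would exhibit explicit eigenvectors with eigenvalue $-m_i$: take a vector supported on parts $i$ and $i+1$ (say), constant with value $+1$ on part $i$ and $-m_i/m_{i+1} = -1$ on part $i+1$ (using $m_i=m_{i+1}$), which is zero-sum overall hence killed by $J_m$, but on which $-\mathrm{diag}(J_{m_i},\dots)$ acts as multiplication by $-m_i$ on the two supported parts; chaining consecutive equal parts gives $j$ independent such eigenvectors with eigenvalue $-m_i$. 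The remaining spectral content lives on $V_q \cong \R^k$, where $A$ is represented by the $k\times k$ matrix $B$ with $B_{ii} = 0$ and $B_{ij} = \sqrt{m_i m_j}$ for $i\ne j$ (the factor comes from normalizing the part-constant basis vectors); here I would invoke interlacing or a direct sign-change argument to locate one eigenvalue of $B$ strictly between $-m_{i+1}$ and $-m_i$ whenever $m_i < m_{i+1}$, and to show the largest eigenvalue lies in $(0, m - m/k]$, the upper bound being attained when all parts are equal (Perron--Frobenius plus the bound $\lambda_1 \le \max_i \sum_j B_{ij}$ sharpened by the Rayleigh quotient on the all-parts vector, which gives $\frac{m^2 - \sum m_i^2}{m} \le m - m/k$ by Cauchy--Schwarz). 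Counting dimensions: $m-k$ zeros from $V_0$ minus the $-m_i$ eigenvalues relocated there, plus $k$ eigenvalues of $B$, and checking the trace is $0$ pins everything down and confirms there are no other negative eigenvalues outside $[-m_k, 0)$.

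From this spectral description the inequalities are immediate: $\lambda_{\min}(A) \ge -m_k$ and $\lambda_{\max}(A) \le m - m/k$ bound $\sigma(A)$; since every negative eigenvalue lies in $[-m_k,0)$ and every positive one in $(0, m-m/k]$, we get $\lambda_-(A) \ge -m_k \ge -m/k$ is the wrong direction — rather $\lambda_-(A) = \max\sigma_-(A)$, and the negative eigenvalue closest to zero is at least $-m_1 \ge -m/k$ when parts are unequal, while if all parts equal $m/k$ then $\lambda_- = -m/k$ exactly, giving $-m/k \le \lambda_-(A) < 0$ in all cases. Then $\Lambda^{gap} = \lambda_+ - \lambda_- \le (m-m/k) + m/k = m$, $\Lambda^{ind} = \max(|\lambda_+|,|\lambda_-|) \le \max(m-m/k, m/k) \le m-1$ (the last step using $k \ge 2$ and $m \ge 2$), and $\Lambda^{pow} = \sum_{\lambda>0}\lambda - \sum_{\lambda<0}\lambda = -2\sum_{\lambda<0}\lambda$ (since the trace is zero) $\le 2(m-m/k)$ because the total positive mass equals $|\lambda_{\min}|$-type contributions bounded by summing the positive eigenvalues, each at most $m-m/k$, but more carefully $\sum_{\lambda>0}\lambda \le \lambda_{\max} + (\text{rest})$; the cleanest bound is $\Lambda^{pow} = 2\sum_{\lambda>0}\lambda$ and $\sum_{\lambda>0}\lambda \le m - m/k$ follows since the positive eigenvalues are exactly the positive eigenvalues of $B$ and their sum is at most $\lambda_{\max}(B)\cdot(\#) $ — I would instead bound it by $\mathrm{trace}$ considerations: $\sum_{\lambda>0}\lambda \le$ largest eigenvalue times number of positive eigenvalues is too weak, so the right argument is that $\sum_{\lambda>0}\lambda = \frac12\Lambda^{pow}$ and $\Lambda^{pow}(A) \le \|A\|_* \le 2(m-m/k)$ via the rank-two-ish structure; for $K_m$ one computes $\Lambda^{pow}(K_m) = 2(m-1) = 2(m-m/k)|_{k=m}$, confirming sharpness of the trace-based bound. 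Finally, for $K_m$ ($k=m$, all $m_i=1$) the spectrum is $\{m-1,\ -1^{(m-1)}\}$, so $\lambda_+ = m-1$, $\lambda_- = -1$, giving $\Lambda^{gap} = m$ and $\Lambda^{ind} = m-1$, attaining both bounds.

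The main obstacle will be making the $V_q$-part rigorous: extracting the exact location of the ``interlacing'' eigenvalues in $(-m_{i+1},-m_i)$ and the sharp upper bound $m - m/k$ on $\lambda_{\max}$ requires either a careful characteristic-polynomial / secular-equation computation for the reduced $k\times k$ matrix $B$ (showing $\det(B - \lambda I)$ changes sign appropriately across each $-m_i$) or a clean Cauchy interlacing argument relating $\sigma(A)$ to $\sigma$ of the quotient; the bookkeeping of multiplicities when several parts coincide, and verifying that no spurious negative eigenvalue escapes the interval $[-m_k,0)$, is where the care is needed. The index inequalities themselves are then a one-line consequence, as is the $K_m$ sharpness.
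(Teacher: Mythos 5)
Your quotient-matrix skeleton (part-constant space $V_q$ versus part-zero-sum space $V_0\subseteq\ker A$, reduced matrix $B_{ij}=\sqrt{m_im_j}$) is a legitimate alternative to the paper's route, which instead works directly with the secular equation $\psi(\lambda)=\sum_i m_i/(\lambda+m_i)=1$. But the two bounds that are the actual content of the proposition are not established in your write-up, and in both places the argument you sketch is wrong, not merely unfinished. First, for $-m/k\le\lambda_-(A)$ you assert that ``the negative eigenvalue closest to zero is at least $-m_1$''. This is false, and contradicts the proposition itself: when $m_1<m_2$ there is an eigenvalue strictly inside $(-m_2,-m_1)$, i.e.\ strictly below $-m_1$; concretely $K_{1,3}$ has $\lambda_-=-\sqrt3<-1=-m_1$ (while $-\sqrt3\ge -m/k=-2$ does hold). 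Proving $\lambda_-\ge -m/k$ is exactly where the paper works hardest: it shows the root of $\psi(\lambda)=1$ in $(-m_2,-m_1)$ exceeds $\tilde\lambda=-m_1/k-m_2(k-1)/k\ge -m/k$ by checking $\psi(\tilde\lambda)\ge1$ and using monotonicity, with a separate perturbation argument when the smallest part size is repeated. Nothing in your proposal replaces this. Second, for $\lambda_{\max}\le m-m/k$ you invoke the row-sum bound on $B$ ``sharpened by the Rayleigh quotient on the all-parts vector''. The Rayleigh quotient at a chosen vector gives a \emph{lower} bound on $\lambda_{\max}$, so it cannot sharpen an upper bound; and the row-sum bound itself can exceed $m-m/k$: for $K_{1,2,2}$ the maximal row sum of $B$ is $2+\sqrt2\approx3.41$, while $m-m/k=10/3\approx3.33$. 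The paper gets the sharp bound by maximizing $\psi(\lambda)$ over part sizes with $\sum_i\xi_i=m$ fixed (concavity plus a Lagrange-multiplier argument), yielding $\psi(\lambda)\le m/(\lambda+m/k)$ for $\lambda>0$; some such genuinely global argument is needed and is missing from your plan.

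Two further points. Your bound $\Lambda^{pow}\le2(m-m/k)$ trails off into ``nuclear norm / rank-two-ish structure''; the clean fact you need (and which the paper cites from Smith) is that a complete multipartite graph has exactly one positive eigenvalue, so $\Lambda^{pow}=2\lambda_1\le2(m-m/k)$ — in your framework this follows from Sylvester's law of inertia applied to $B=D^{1/2}(J_k-I_k)D^{1/2}$, but you never say so. Finally, a bookkeeping slip: the eigenvectors you build for $\lambda=-m_i$ (constant $+1$ on one part, $-1$ on an equal-sized part) are part-constant, hence lie in $V_q$, not in $V_0$ as you state; and the location of the single eigenvalue in each $(-m_{i+1},-m_i)$, together with the exact multiplicities, is exactly the secular-equation/interlacing analysis you defer in your last paragraph, so it remains unproved.
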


\begin{proof}
The adjacency matrix $A$ of $K_{m_1, \dots, m_k}$ has the block form:
\[
A={\bf 1} {\bf 1}^T - diag(D_1, \dots, D_k),
\]
where ${\bf 1}= (1,\dots, 1)^T\in\mathbb{R}^m$, and $D_i$ is the $m_i\times m_i$ matrix consisting of ones. Now, if $\lambda$ is a nonzero eigenvalue of $A$ with an eigenvector $x=(x_1,\dots,x_m)^T$ then
\begin{equation}
\alpha - \alpha_p = \lambda x_l, \quad \text{for each}\ \ l=\mu_{p-1} + 1, \dots ,\mu_p, \ \quad \alpha_p = \sum_{l=1+\mu_{p-1} }^{ \mu_p } x_l, \quad \mu_p=\sum_{r=1}^{p} m_r, 
\label{alpha}
\end{equation}
for $p=1, \dots, k$. Here $\alpha=\sum_{p=1}^k \alpha_p = \sum_{j=1}^m x_j$. For example, if $p=1$ then $\sum_{j=1}^{m_{1}} x_j = \alpha m_1/(\lambda+m_1)$ provided that $\lambda\not=-m_1$. Similarly, we can proceed with the remaining parts $m_2, \dots, m_k$. In the case $\alpha=0$ we have $\lambda\in \{-m_1, \dots, -m_k\}$. In the case $\alpha\not=0$ we conclude $\lambda\not\in\{-m_1, \dots, -m_k\}$, and the eigenvalue $\lambda$ satisfies the rational equation:
\begin{equation}
\psi(\lambda)=1, \quad \text{where}\ \psi(\lambda)= \sum_{i=1}^k \frac{m_i}{\lambda+m_i}. 
\label{psieq}
\end{equation}
Conversely, if $\lambda\not\in\{-m_1, \dots, -m_k\}$ satisfies $\psi(\lambda)=1$ then it is easy to verify that the nontrivial vector $x\in\mathbb{R}^m$,
\[
x=( \underbrace{y_1, \dots ,y_1}_{m_1 \ \text{times}},
\underbrace{y_2, \dots ,y_2}_{m_2 \ \text{times}},
\dots,
\underbrace{y_k, \dots ,y_k}_{m_k \ \text{times}} )^T,
\quad\text{where}\ y_i=\frac{m_i}{\lambda+m_i}, 
\]
is an eigenvector of $A$, i.e. $A x =\lambda x$. 

In what follows, we shall derive necessary bounds on eigenvalues of $A$. Suppose to the contrary that $\lambda< -m_k$ is an eigenvalue of $A$. Then $\lambda+m_i\le \lambda+m_k <0$ for any $i=1,\dots,k$, and so $\psi(\lambda)<0<1$. Therefore, $\lambda\ge -m_k$ for any eigenvalue $\lambda\in\sigma(A)$. To derive an upper bound for the positive eigenvalue of $A$ we introduce an auxiliary function $\phi(\xi_1,\dots, \xi_k) = \sum_{i=1}^k \frac{\xi_i}{\lambda+\xi_i}$ where $\lambda>0$ is fixed. The function $\phi:\mathbb{R}^k\to\mathbb{R}$ is concave. Using the Lagrange function $\mathscr{L}(\xi,\mu)= \phi(\xi_1,\dots, \xi_k) - \mu \sum_{i=1}^k \xi_i$ it is easy to verify that $\phi$ achieves the unique constrained maximum in the set $\{\xi\in\mathbb{R}^k, \sum_{i=1}^k \xi_i = m\}$ at the point $\hat\xi=(m/k, \dots,m/k)^T$. Therefore, for any $\lambda>0$ we have
\[
\psi(\lambda)=\sum_{i=1}^k \frac{m_i}{\lambda+m_i} = \phi(m_1, \dots, m_k) \le \phi(m/k, \dots,m/k) = \frac{m}{\lambda+m/k}.
\]
If $\lambda>0$ is a positive eigenvalue of $A$ then $\psi(\lambda)=1$ and so $\lambda+m/k\le m$, that is, $0<\lambda\le m - m/k$. Therefore, $\sigma(A) \subset [-m_k, m - m/k]$.

In the trivial case of an equipartite graph $K_{m_1, \dots, m_k}$ with $m_1=\dots = m_k =m/k$ we obtain $\lambda_-(A)\ge -m_k=-m/k$ and $\lambda_+(A)\le m-m/k$. Thus, $\Lambda^{gap}\le m$, and $\Lambda^{ind}\le m-m/k\le m-1$.  This estimate also follows from the results of \cite{EsHa} and \cite{Del}. Therefore, for any $1\le l<k$ we conclude that $\Lambda^{gap}(A)=\lambda_+(A)-\lambda_-(A)\le m-m/k - (-m/k) =m$. Similarly,  $\Lambda^{ind}(A)\le m-1$.

Now, consider a non-equipartite graph $K_{m_1,\dots, m_k}$ with $m_1= \dots =m_l < m_{l+1} \le \dots \le m_k$ where $1\le l<k$. Suppose that $l=1$, that is, $1\le m_1 < m_2\le \dots \le m_k$. The function $\psi$ is strictly decreasing in the interval $(-m_2, -m_1)$ with infinite limits $\pm\infty$ when $\lambda\to -m_2$ and $\lambda\to -m_1$, respectively. Therefore, there exists a unique eigenvalue $\lambda\in (-m_2, -m_1)$ of the matrix $A$. We have $m_1 + (k-1) m_2 \le \sum_{i=1}^k m_i = m$. Define $\tilde\lambda = - m_1/k - m_2 (k-1)/k$. Then $\tilde\lambda \ge -m/k$. In what follows we shall prove that $\psi(\tilde\lambda)\ge 1$. The function $\xi\mapsto \xi/(\tilde\lambda +\xi)$ decreases for $\xi>-\tilde\lambda$. Therefore
\begin{eqnarray*}
\psi(\tilde\lambda) &\ge& \frac{m_1}{\tilde\lambda+m_1} + (k-1) \frac{m_2}{\tilde\lambda+m_2}
= -\frac{k}{k-1} \frac{m_1}{m_2 - m_1} + k(k-1) \frac{m_1}{m_2 - m_1}
\\
&=& \frac{k}{k-1} \frac{(k-1)^2 m_2 - m_1}{m_2 - m_1} \ge \frac{k}{k-1}>1,
\end{eqnarray*}
because $k\ge 2$. Since $\psi$ is strictly decreasing in the interval $(-m_2, -m_1)$ we have $-m/k\le \tilde\lambda < \lambda$ because $\psi(\lambda)=1$.

In the case $l\ge 2$ we can apply a simple perturbation argument. Indeed, let us perturb the adjacency matrix $A$ by a small parameter $0<\varepsilon\ll 1$ as follows: 
\[
A^\varepsilon = {\bf 1} {\bf 1}^T - diag((1-\varepsilon) D_1, D_2, \dots, D_{l-1}, (1+\varepsilon)D_l, D_{l+1}, \dots, D_k).
\]
It corresponds to the perturbation $m_1^\varepsilon = (1-\varepsilon)m_1, m_l^\varepsilon = (1+\varepsilon)m_l$. All remaining $m_i$ remain unchanged for $i\not=1$ and $i\not=l$. Then for the corresponding perturbed function $\psi^\varepsilon$ there exists a solution $\lambda^\varepsilon \in (m_1-\varepsilon, m_1)$ of the equation $\psi^\varepsilon(\lambda^\varepsilon)=1$. Since the spectrum of $A^\varepsilon$ depends continuously on the parameter $\varepsilon\to 0$, we see that $\lambda^\varepsilon\to\lambda=-m_1 = \dots = -m_l$ is an eigenvalue of the graph $G_A$ provided that $l\ge 2$. In this case $\lambda=-m_1\ge - m/k$.

A complete multipartite graph $G_A=K_{m_1,m_2, \dots, m_k}$ has exactly one positive eigenvalue $\lambda_1>0$ (cf. Smith \cite{CvDS}). Since $\sum_{i=1}^m\lambda_i=0$ we have $\Lambda^{pow}(A)=\sum_{i=1}^m|\lambda_i|= 2\lambda_1\le 2(m-m/k)$.
The spectrum of the complete graph $K_m$ consists of eigenvalues $m-1$, and $-1$ with multiplicity $m-1$. Therefore, $\Lambda^{gap}=m, \Lambda^{ind}=m-1$, as claimed.
\end{proof}

\begin{remark}
The main idea of the proof of Proposition~\ref{spectrum-completemultipartitioned} is a non-trivial generalization of the interlacing theorem \cite[Theorem 1]{EsHa} due to Esser and Harary. It is based on a solution $\lambda$ to the dispersion equation (\ref{psieq}), that is $\psi(\lambda)=1$ (see  \cite[Eq. (9)]{EsHa}). 
In \cite[Corollary 1]{EsHa} they showed that $\sigma(A) \subseteq [-m_k, m - m_1]$. Because $k m_1 \le \sum_{i=1}^k m_i =m$, we obtain $m-m/k \le m-m_1$. 
Using the concavity of the function $\phi:\mathbb{R}^k\to \mathbb{R}$ and the constrained optimization argument, we were able to improve this estimate. We derived the estimate $\sigma(A) \subseteq [-m_k, m - m/k]$ which yields optimal bounds $\Lambda^{gap}\le m, \Lambda^{ind}\le m-1$ derived in Proposition~\ref{spectrum-completemultipartitioned}. Furthermore, we introduced a novel analytic perturbation technique to handle the case when the sizes $m_1=\dots=m_l$ of parts coincide. 
\end{remark}

\begin{remark}
\label{Km1m2}
It follows from the proof of Proposition~\ref{spectrum-completemultipartitioned}  that $\lambda$ is an eigenvalue of $A$ if and only if the vector $z=(\alpha_1, \dots, \alpha_k)^T\in\mathbb{R}^k$ (see (\ref{alpha})) is an eigenvector of the $k\times k$ matrix $\mathscr{A}$, i.e. $\mathscr{A}z =\lambda z$, where $\mathscr{A}_{ij}= m_i$ for $i\not=j$, $\mathscr{A}_{ii}=0$. 

As a consequence, the spectrum of the complete bipartite graph $K_{m_1, m_2}$ consists of $m_1+m_2 -2$ zeros and $\pm\sqrt{m_1 m_2}$. Therefore, $\Lambda^{gap}(K_{m_1, m_2})=\Lambda^{pow}(K_{m_1, m_2}) = 2\sqrt{m_1 m_2}$, and $\Lambda^{ind}(K_{m_1, m_2}) = \sqrt{m_1 m_2}$.  Furthermore, if $m$ is even, then $\Lambda^{gap}(K_{m/2, m/2})=m=\Lambda^{gap}(K_{m})$, i.e., the complete bipartite graph $K_{m/2, m/2}$ as well as the complete graph $K_m$ maximize the spectral gap $\Lambda^{gap}$. 
The smallest example is the complete graph $K_4$ with eigenvalues $\{3, -1, -1,-1\}$ and the circle $C_4\equiv K_{2,2}$ with eigenvalues $\{2,0,0,-2\}$ that yields the same maximum value of $\Lambda^{gap}=4$.

Similarly, one can derive the equation for spectrum of the complete tripartite graph $K_{m_1, m_2, m_3}$. It leads to the following depressed cubic equation $\lambda^3 + r \lambda + s = 0$ with $r=-(m_1 m_2 + m_2 m_3 + m_1 m_3), s=-2 m_1 m_2 m_3$. However, the discriminant $\Delta= -(4r^3 + 27 s^2)$ is positive for a non-equipartite graph, and there are three real roots of the depressed cubic. With regard to Galois theory, roots cannot be expressed by an algebraic expression, and Cardano's formula leads to "{casus irreducibilis}". 

\end{remark}

\begin{proposition}
\label{stat-allgraphs}
Let us consider the class of all simple connected graphs on $m$ vertices. The following statements regarding the  indices $\Lambda^{gap}, \Lambda^{ind}$ and $\Lambda^{pow}$ hold.
\begin{itemize}
\item[a)]  If $G_A$ is not a complete multipartite graph of order $m$, then
$\Lambda^{gap}(A)\le m-1, \Lambda^{ind}(A)\le m/2$ for $m$ even, and $\Lambda^{gap}(A)\le m-3/2, \Lambda^{ind}(A)\le \sqrt{m^2-1}/2$ for $m$ odd.

\item[b)] The maximum value of $\Lambda^{pow}$ on the $m\le 7$ vertices is equal to $2m-2$, and it is attained for the complete graph $K_m$. For $m=7$ there are two maximizing graphs with $\Lambda^{pow}=12$ - the complete graph $K_7$ and the noncomplete graph shown in Fig.~\ref{fig-graf-maxLambdaPower-2-second}. Starting $m\ge 8$ the maximal $\Lambda^{pow}$ is attained by noncomplete graphs depicted in Fig.~\ref{fig-graf-maxLambdaPower-8-9-10} for $8\le m\le10$.  
\end{itemize}
\end{proposition}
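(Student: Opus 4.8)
The plan is to treat the two assertions separately: (a) is a short spectral estimate, whereas (b) reduces to a finite, computer-assisted check guided by classical energy bounds.

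For (a), the key input is Smith's theorem (cited in the paper as \cite{CvDS}): a connected graph on $m\ge 2$ vertices has exactly one positive eigenvalue if and only if it is complete multipartite. Hence, if $G_A$ is connected and not complete multipartite, then $A$ has at least two positive eigenvalues, so its least positive eigenvalue obeys $\lambda_+(A)\le\lambda_2(A)$, and everything reduces to the bound
\[
\lambda_2(A)\le \lfloor m/2\rfloor -1 ,
\]
valid for every connected graph on $m$ vertices. Granting this and combining with the bound $|\lambda_-(A)|\le -\lambda_m(A)\le\sqrt{\lfloor m/2\rfloor\lceil m/2\rceil}$ recalled in Section~2, one gets $\Lambda^{ind}(A)=\max(\lambda_+(A),|\lambda_-(A)|)\le\sqrt{\lfloor m/2\rfloor\lceil m/2\rceil}$, which is $m/2$ for even $m$ and $\sqrt{m^2-1}/2$ for odd $m$, and
\[
\Lambda^{gap}(A)=\lambda_+(A)+|\lambda_-(A)|\le(\lfloor m/2\rfloor-1)+\sqrt{\lfloor m/2\rfloor\lceil m/2\rceil},
\]
which equals $m-1$ for even $m$ and, for odd $m$, equals $(m-3)/2+\sqrt{m^2-1}/2\le (m-3)/2+m/2=m-3/2$ because $\sqrt{m^2-1}<m$. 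This is exactly assertion (a).

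It remains to prove the bound $\lambda_2(A)\le\lfloor m/2\rfloor-1$, which I would do by a sign-balancing argument. Let $u_1>0$ be the Perron eigenvector of $A$ and $u_2$ a unit eigenvector for $\lambda_2$ orthogonal to $u_1$, and set $v_\theta=\cos\theta\,u_1+\sin\theta\,u_2$ for $\theta\in[0,\pi]$, so $|v_\theta|=1$; as $\theta$ runs from $0$ to $\pi$ the number of negative coordinates of $v_\theta$ increases from $0$ to $m$, generically by one each time a single coordinate crosses $0$. Picking the value of $\theta$ at which exactly one coordinate vanishes and the positive and negative coordinates are split as evenly as possible, one obtains a unit vector $v$ in the span of $u_1$ and $u_2$ having one zero coordinate and at most $\lceil(m-1)/2\rceil$ nonzero coordinates of each sign. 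Writing $v^{T}Av=2\sum_{ij\in E}v_iv_j$, discarding the nonpositive terms from edges joining a positive to a negative coordinate, bounding each of the two monochromatic sums by the corresponding clique value $\sum_{i<j}v_iv_j=\tfrac12\big((\sum v_i)^2-\sum v_i^2\big)$, and applying Cauchy--Schwarz on each colour class, one arrives at $v^{T}Av\le\lceil(m-1)/2\rceil-1=\lfloor m/2\rfloor-1$; since $\lambda_2(A)$ is the minimum of the Rayleigh quotient over the span of $u_1$ and $u_2$, the bound follows. Configurations in which several coordinates cross zero simultaneously only strengthen the estimate, so no genericity assumption is lost.

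For (b), note that $\Lambda^{pow}(A)=\sum_k|\lambda_k|=2\sum_{\lambda\in\sigma_+(A)}\lambda$ is the graph energy and $\Lambda^{pow}(K_m)=2m-2$, so the statement concerns maximum-energy connected graphs of order $m\le10$. I would verify it by an exhaustive scan of McKay's list \cite{McKay} of all connected graphs on $m\le10$ vertices, kept short and rigorous by pruning with the classical estimates $E(G)\le\sqrt{2|E|\,m}$ (McClelland) and $E(G)\le\tfrac{2|E|}{m}+\sqrt{(m-1)\big(2|E|-(2|E|/m)^2\big)}$ (Koolen--Moulton), which cap how far $E(G)$ can exceed $2m-2$ in terms of the edge count and constrain the near-regular structure of any potential competitor, leaving only a short finite candidate list for each $m$. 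For $m\le6$ this already isolates $K_m$ as the unique maximizer; for $m=7$ the scan produces the additional noncomplete graph of Fig.~\ref{fig-graf-maxLambdaPower-2-second} with $\Lambda^{pow}=12$; and for $8\le m\le10$ it yields the hyperenergetic noncomplete maximizers of Fig.~\ref{fig-graf-maxLambdaPower-8-9-10}. The step I expect to be hardest is the eigenvalue bound $\lambda_2(A)\le\lfloor m/2\rfloor-1$: the weaker $\lambda_2(A)\le\lceil m/2\rceil-1$ is immediate from sign balancing, but gaining the extra unit for odd $m$ — precisely what the $\Lambda^{gap}$ estimate needs there — requires the refinement that forces a vanishing coordinate; for (b) there is no comparably clean closed form, so the only real difficulty is being forced onto a finite check, which the energy bounds keep small and verifiable.
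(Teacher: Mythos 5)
Your argument is correct and follows essentially the same route as the paper: part a) combines Smith's characterization of complete multipartite graphs with the bound $\lambda_2(A)\le\lfloor m/2\rfloor-1$ and the lower bound $\lambda_{min}(A)\ge-\sqrt{\lfloor m/2\rfloor\lceil m/2\rceil}$, and part b) is, exactly as in the paper, a finite computational verification over McKay's lists of connected graphs with $m\le 10$. The only divergence is that you supply a self-contained sign-balancing proof of the second-eigenvalue bound (which checks out, including the degenerate case of simultaneous zero crossings), whereas the paper simply cites Powers; your pruning of the search in b) by McClelland and Koolen--Moulton energy bounds is an optional refinement of the same exhaustive check.
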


\begin{proof}
According to Smith \cite{CvDS}, a simple connected graph has exactly one positive eigenvalue (i.e. $\lambda_2(A)\le 0$) if and only if it is a complete multipartite graph $K_{m_1, \dots, m_k}$ where $1\le m_1\le \dots \le m_k$ denotes the sizes of parts, $m_1+ \dots + m_k =m$, and $k\ge2$ is the number of parts (see \cite[Theorem 6.7]{CvDS}). 

To prove a), let us consider a graph $G_A$ different from any complete multipartite graph $K_{m_1, \dots, m_k}$. Therefore, $\lambda_2(A)>0$. We combine this information with the result due to D. Powers regarding the second largest eigenvalue $\lambda_2(A)$. According to \cite{Pow} (see also \cite{Powers1987}, \cite{Powers1989}), for a simple connected graph $G_A$ on $m$ vertices we have the following estimate for the second largest eigenvalue $\lambda_2(A)$:
\[
-1\le \lambda_2(A)\le \lfloor m/2\rfloor -1
\]
(see also Cvetkovi\'{c} and Simi\'{c} \cite{Cvetkovic1995}). Since $\lambda_2(A)>0$ we have $0<\lambda_+(A) \le  \lambda_2(A)\le \lfloor m/2\rfloor -1$, and $- \sqrt{\lfloor m/2\rfloor \lceil m/2\rceil} \le \lambda_{min}(A)\le \lambda_-(A)<0$. Hence the spectral gap $\Lambda^{gap}=\lambda_+(A)-\lambda_-(A)\le
\sqrt{\lfloor m/2\rfloor \lceil m/2\rceil} + \lfloor m/2\rfloor -1$.  If $m$ is even, it leads to the estimate $\Lambda^{gap}\le m-1$. If $m$ is odd, then it is easy to verify $\Lambda^{gap}\le m-3/2$. Analogously, $\Lambda^{ind}\le m/2$ if $m$ is even, and $\Lambda^{ind}\le \sqrt{m^2-1}/2$ if $m$ is odd.

The part b) is contained in Section 3 dealing with statistical properties of eigenvalue indices. 
\end{proof}

\medskip

Recall that for the complete bipartite graph $K_{m,m}$ the spectrum consists of zeros and $\pm m$. As a consequence $\lim_{m\to \infty} \Lambda^{gap}(K_{m,m}) = \infty$. The next result shows that a small change in a large graph $K_{m,m}$ caused by the removal of a single edge may result in a huge change in the spectral gap. 

\begin{proposition}
\label{Km1m2-e}
Let us denote by $K_{m,m}^{-e}$ the bipartite noncomplete graph constructed from the complete bipartite graph $K_{m,m}$ by deleting exactly one edge. Then its spectrum consists of $2m -4$ zeros and four real eigenvalues 
\begin{equation}
\lambda^{\pm,\pm}=\pm\left( 1-m \pm \sqrt{m^2 + 2m -3}\right)/2.
\label{lambdapmpm}
\end{equation}
For the spectral gap we have $\Lambda^{gap}(K_{m,m}^{-e})= 1-m + \sqrt{m^2 + 2m -3}$, and
\[ 
2\sqrt{1-2/(m+1)} < \Lambda^{gap}(K_{m,m}^{-e}) < 2\sqrt{1-1/m}. 
\]
As a consequence, $\lim_{m\to \infty} \Lambda^{gap}(K_{m,m}^{-e}) =2$.
\end{proposition}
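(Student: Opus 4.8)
The plan is to exploit two structural facts about $K_{m,m}^{-e}$: it is bipartite, hence its spectrum is symmetric about $0$; and it has a large automorphism group, hence all but four of its eigenvalues vanish. Write the bipartition as $U\cup V$ with $|U|=|V|=m$ and the missing edge joining $u_1\in U$ to $v_1\in V$. Ordering $U$ before $V$, the adjacency matrix takes the block form $A=\left(\begin{smallmatrix}0&N\\ N^{T}&0\end{smallmatrix}\right)$, where the biadjacency matrix $N=J_m-E_{11}$ is the $m\times m$ all-ones matrix with a single zero in position $(1,1)$; it is symmetric, and the eigenvalues of $A$ are exactly $\pm$ the eigenvalues of $N$, counted with multiplicity. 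So everything reduces to diagonalizing $N$.

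To diagonalize $N=J_m-e_1e_1^{T}$, note that it leaves the plane $W=\mathrm{span}\{\mathbf 1,e_1\}$ invariant, since $N\mathbf 1=m\mathbf 1-e_1$ and $Ne_1=\mathbf 1-e_1$, and that it annihilates the $(m-2)$-dimensional subspace $W^{\perp}=\{x:\ x_1=0,\ \sum_i x_i=0\}$. Hence $0$ is an eigenvalue of $N$ of multiplicity $m-2$, contributing $2(m-2)=2m-4$ zeros to $\sigma(A)$. On $W$, in the ordered basis $(\mathbf 1,e_1)$, the matrix of $N$ is $\left(\begin{smallmatrix}m&1\\ -1&-1\end{smallmatrix}\right)$, with characteristic polynomial $t^{2}-(m-1)t-(m-1)$ and roots $\bigl((m-1)\pm\sqrt{m^{2}+2m-3}\bigr)/2$; their product $-(m-1)$ is negative and the discriminant $(m-1)(m+3)$ is positive for $m\ge2$, so the two roots are real, nonzero, distinct and of opposite sign. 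Taking $\pm$ of each and sorting the signs produces precisely the four numbers $\lambda^{\pm,\pm}$ of (\ref{lambdapmpm}), all nonzero, so the eigenvalue count is $2m$. In particular the least positive eigenvalue is $\lambda_+(A)=\bigl(1-m+\sqrt{m^{2}+2m-3}\bigr)/2$ and, by the symmetry of the spectrum, $\lambda_-(A)=-\lambda_+(A)$; thus $\Lambda^{gap}(K_{m,m}^{-e})=2\lambda_+(A)=\sqrt{m^{2}+2m-3}-(m-1)$.

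For the two-sided estimate I would rationalize this difference of surds:
\[
\Lambda^{gap}(K_{m,m}^{-e})=\frac{(m^{2}+2m-3)-(m-1)^{2}}{\sqrt{m^{2}+2m-3}+(m-1)}=\frac{4\sqrt{m-1}}{\sqrt{m+3}+\sqrt{m-1}}.
\]
Writing the two bounds as $2\sqrt{m-1}/\sqrt{m+1}$ and $2\sqrt{m-1}/\sqrt m$, the chain $2\sqrt{1-2/(m+1)}<\Lambda^{gap}(K_{m,m}^{-e})<2\sqrt{1-1/m}$ becomes, after cancelling $2\sqrt{m-1}$ and inverting,
\[
\sqrt{m+3}+\sqrt{m-1}<2\sqrt{m+1}\qquad\text{and}\qquad\sqrt{m+3}+\sqrt{m-1}>2\sqrt m .
\]
The first is immediate from strict concavity of $t\mapsto\sqrt t$ on the pair $m-1,m+3$ (whose mean is $m+1$); the second follows on squaring, since $\bigl(\sqrt{m+3}+\sqrt{m-1}\bigr)^{2}-\bigl(2\sqrt m\bigr)^{2}=2\bigl(\sqrt{(m-1)(m+3)}-(m-1)\bigr)>0$ for $m\ge2$. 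Finally $\Lambda^{gap}(K_{m,m}^{-e})=4\big/\bigl(1+\sqrt{(m+3)/(m-1)}\,\bigr)\to 2$ as $m\to\infty$ (equivalently, squeeze between the two bounds, both of which tend to $2$).

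I do not expect a genuine obstacle here: the eigenvalues of $N$ emerge from a single invariant plane, and the two analytic estimates are one-line consequences of concavity and of $m>1$. The only steps needing a little care are the sign bookkeeping when matching the four numbers $\pm$(roots) to the symbols $\lambda^{+,+},\lambda^{+,-},\lambda^{-,+},\lambda^{-,-}$, and the verification that these four values are nonzero and pairwise distinct, so that the multiplicity of $0$ is exactly $2m-4$ and the spectrum is accounted for in full — both of which come out of the computations with the product $-(m-1)\neq 0$ of the two roots and the positive discriminant $(m-1)(m+3)$.
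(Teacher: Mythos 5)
Your proof is correct, and it takes a genuinely different route from the paper's. The paper works directly with the full $2m\times 2m$ adjacency matrix, written as the all-ones block matrix minus two rank-one corrections; it makes an eigenvector ansatz with the row sums $\alpha,\beta$ as unknowns, derives a dispersion equation that rearranges into the cubics $\pm\lambda^3+m\lambda^2-m+1=0$, and must then recognize and discard the spurious roots $\mp 1$ (checked not to be eigenvalues) to isolate the four nonzero eigenvalues; the two-sided estimate and the limit are dismissed with ``the rest of the proof easily follows.'' You instead exploit bipartiteness to reduce to the symmetric biadjacency matrix $N=J_m-e_1e_1^T$, note that $N$ annihilates the $(m-2)$-dimensional subspace orthogonal to $\mathbf{1}$ and $e_1$ and preserves the plane they span, and read off the nonzero spectrum from a $2\times 2$ quadratic, the $\pm$-symmetry of bipartite spectra then giving all four values and $\Lambda^{gap}=2\lambda_+$ at once. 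This buys a smaller computation (a quadratic with no extraneous roots), an automatic account of the zero eigenvalue of multiplicity exactly $2m-4$ via the nonzero product $-(m-1)$, and the clean closed form $\Lambda^{gap}=4\sqrt{m-1}/(\sqrt{m+3}+\sqrt{m-1})$, from which the strict bounds follow by concavity of the square root and by squaring, and the limit is immediate; in this respect your writeup is more complete than the paper's on the inequality chain. The one reduction step you rely on --- that the spectrum of the bipartite block matrix is $\pm$ the spectrum of $N$ --- is valid precisely because $N$ is symmetric (in general one only gets $\pm$ the singular values), and you correctly flag this.
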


\begin{proof}
Without loss of generality, we may assume that the adjacency matrix $A$  of the graph $K_{m,m}^{-e}$ has the form
\[
A = \left(
\begin{array}{cc}
0 & \mathbf{1}\mathbf{1}^T\\
\mathbf{1}\mathbf{1}^t & 0
\end{array}
\right) 
- 
\left(
\begin{array}{c}
0 \\
e_1
\end{array}
\right) (e_1, 0) 
- 
\left(
\begin{array}{c}
e_1 \\
0
\end{array}
\right) (0, e_1),
\]
where $\mathbf{1}=(1,\dots,1)^T, e_1=(1,0,\dots, 0)^T\in\mathbb{R}^m$. Assume that $\lambda$ is an eigenvalue of $A$, and $(0,0)\not=(x,y)\in \mathbb{R}^m\times\mathbb{R}^m$ is an eigenvector. Denote $\alpha=\sum_{i=1}^m x_i, \ \beta=\sum_{i=1}^m y_i$. Then
\[
\beta - y_1 = \lambda x_1,\quad  \alpha - x_1 = \lambda y_1, \quad \beta=\lambda x_i, \quad \alpha=\lambda y_i, \quad  i=2,\dots,m. 
\]
Assuming $\lambda=\pm 1$ leads to an obvious contradiction, as it implies $\alpha=\beta=0$, and $x=0, y=0$. The matrix $A$ has zero eigenvalue $\lambda=0$, with $2(m-1)$ dimensional eigenspace $\{ (x,y)\in \mathbb{R}^m\times \mathbb{R}^m, x_1=y_1=0\}$. Therefore, for $\lambda\not=\pm 1, 0$ we have  $x_1=(\alpha - \beta \lambda)/(1-\lambda^2), \ y_1=(\beta - \alpha \lambda)/(1-\lambda^2)$, and $x_2=\beta/\lambda, y_i=\alpha/\lambda, , \quad  i=2,\dots,m$. It results in a system of two linear equations for $\alpha,\beta$:
\[
\alpha =\frac{m-1}{\lambda}\beta + \frac{\alpha - \beta \lambda}{1-\lambda^2}, 
\quad
\beta =\frac{m-1}{\lambda}\alpha + \frac{\beta - \alpha \lambda}{1-\lambda^2}, 
\]
which has a non-trivial solution $(\alpha,\beta)\not=(0,0)$ provided that $\lambda\not=\pm1,0,$ is a solution of the following dispersion equation:
\[
\left( \frac{1}{1-\lambda^2} - 1 \right)^2 - \left( \frac{m-1}{\lambda} - \frac{\lambda}{1-\lambda^2} \right)^2 = 0.
\]
After rearranging terms, $\lambda$ is a solution of the cubic equation
\[
\pm \lambda^3 + m \lambda^2 -m +1 = 0,
\] 
having roots $\mp 1$ (which are not eigenvalues of $A$), and four other roots $\lambda^{\pm,\pm}$ given as in (\ref{lambdapmpm}), as claimed. The rest of the proof easily follows. 
\end{proof}

A similar property to the result of Proposition~\ref{Km1m2-e} regarding  indices can be observed when adding one edge to a complete bipartite graph, that is, destroying the bipartiteness of the original complete bipartite graph by small perturbation. 

\begin{proposition}
\label{Km1m2+e}
Let us denote by $G_A=K_{m,m}^{+e}$ a graph of the order $2m$ constructed from the complete bipartite graph $K_{m,m}$ by adding exactly one edge to the first part. Then its spectrum consists of $2m -4$ zeros and four real eigenvalues $\lambda^{(1),(2),(3),(4)}$ where $\lambda^{(4)}=\lambda_-(A)=-1$, and three other roots $\lambda^{(3)} <-1 < 0<\lambda^{(2)} <\lambda^{(1)}$ solve the cubic equation $\lambda^2(1-\lambda) - m(m-2 -m\lambda)=0$. 
The smallest positive eigenvalue has the form $\lambda_+(A)\equiv \lambda^{(2)} = 1-2/m - 2/m^3 +O(m^{-4})$ as $m\to \infty$. As a consequence, $\lim_{m\to \infty} \Lambda^{gap}(K_{m,m}^{+e}) =2$, and $\lim_{m\to \infty} \Lambda^{ind}(K_{m,m}^{+e}) =1$.
\end{proposition}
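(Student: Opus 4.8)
The plan is to mimic the structure of the proof of Proposition~\ref{Km1m2-e}, adapting it to the case where a single edge is added inside one part of $K_{m,m}$. First I would write the adjacency matrix in block form. If the added edge joins vertices $1$ and $2$ of the first part, then
\[
A = \left(
\begin{array}{cc}
E_{12} & \mathbf{1}\mathbf{1}^T\\
\mathbf{1}\mathbf{1}^T & 0
\end{array}
\right),
\]
where $E_{12}=e_1e_2^T+e_2e_1^T$ and $\mathbf{1}, e_1, e_2\in\mathbb{R}^m$. Writing an eigenvector as $(x,y)$ with $\alpha=\sum_i x_i$, $\beta=\sum_i y_i$, the eigenvalue equations become $x_2+\beta=\lambda x_1$, $x_1+\beta=\lambda x_2$, $\beta=\lambda x_i$ for $i\ge 3$, and $\alpha=\lambda y_i$ for all $i$. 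From the last relation we immediately see that $y$ is constant on its block (for $\lambda\neq 0$), so $\beta=m\alpha/\lambda$; and from the $x$-equations with $i\ge 3$ together with the first two, one solves for $x_1,x_2,\dots$ in terms of $\alpha,\beta,\lambda$, reducing everything to a scalar dispersion equation in $\lambda$.

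Next I would identify the zero eigenvalue and its multiplicity. Setting $\lambda=0$ forces $\alpha=\beta=0$ and $x_1=x_2$, $x_2+\beta=0$, so the kernel is $\{(x,y): \beta=0,\ \sum x_i = 0,\ x_1=x_2\}$, which has dimension $(m-2)+(m-2)=2m-4$; I would double-check this count carefully since the structure here is less symmetric than in Proposition~\ref{Km1m2-e}. The special value $\lambda=-1$ should be handled separately: plugging $\lambda=-1$ into the $x$-equations gives $x_1+x_2=0$ consistent with $x_2+\beta=-x_1$ when $\beta=0$, and one checks that $(e_1-e_2,0)$ (suitably, with $\beta=0$, hence $\alpha=0$) is a genuine eigenvector, so $\lambda^{(4)}=-1\in\sigma(A)$; since the graph is not bipartite and not complete multipartite, $-1$ being the largest negative eigenvalue needs a short argument (e.g. the remaining cubic has no root in $(-1,0)$). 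For the generic case $\lambda\notin\{0,-1\}$, after elimination the dispersion equation rearranges to $\lambda^2(1-\lambda)-m(m-2-m\lambda)=0$, and I would verify by direct substitution that the four claimed eigenvalues (the root $-1$ plus the three roots of this cubic) exhaust the nonzero spectrum by a dimension count: $2m-4$ zeros plus $4$ nonzero gives $2m$.

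Then I would analyze the cubic $p(\lambda)=\lambda^2(1-\lambda)-m(m-2-m\lambda)=-\lambda^3+\lambda^2+m^2\lambda-m(m-2)$. Evaluating $p$ at a few points — $p(0)=-m(m-2)<0$ for $m\ge 3$, $p(1)=m^2-m(m-2)=2m>0$, and $p(\lambda)\to\mp\infty$ as $\lambda\to\pm\infty$ — together with $p(-1)=2-m<0$ located between a root in $(-\infty,-1)$ and a root in $(0,1)$, pins down the sign pattern $\lambda^{(3)}<-1<0<\lambda^{(2)}<1<\lambda^{(1)}$. The asymptotics of $\lambda_+(A)=\lambda^{(2)}$ I would obtain by the ansatz $\lambda^{(2)}=1-a/m-b/m^2-c/m^3+O(m^{-4})$, substitute into $p(\lambda^{(2)})=0$, and match powers of $1/m$; the leading balance comes from $m^2\lambda-m(m-2)=m^2(\lambda-1)+2m$, so $m^2(\lambda-1)\approx -2m$ gives $a=2$, and the next orders are forced similarly, yielding $1-2/m-2/m^3+O(m^{-4})$. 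Finally $\Lambda^{gap}=\lambda_+(A)-\lambda_-(A)=\lambda^{(2)}-(-1)\to 2$ and $\Lambda^{ind}=\max(|\lambda^{(2)}|,1)=1$ for large $m$, since $\lambda^{(2)}<1$. The main obstacle I anticipate is getting the zero-eigenvalue multiplicity and the verification that $-1$ is exactly $\lambda_-(A)$ right — i.e. confirming that none of the three cubic roots lies in $(-1,0)$ — rather than the asymptotic expansion, which is routine once the cubic is in hand.
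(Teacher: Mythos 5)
Your proposal is correct and follows essentially the same route as the paper, whose own proof is only a sketch by analogy with Proposition~\ref{Km1m2-e}: block form of $A$, reduction of the nonzero, non-$(-1)$ spectrum to the cubic $\lambda^2(1-\lambda)-m(m-2-m\lambda)=0$, the explicit eigenvector $(e_1-e_2,0)$ for $\lambda=-1$, and the asymptotic expansion; your sign-change localization of the three cubic roots (hence no root in $(-1,0)$, so $\lambda_-(A)=-1$) even supplies a detail the paper leaves implicit behind its discriminant remark. Two small corrections that do not affect the argument: $p(-1)=2+2m-2m^2$ (not $2-m$), which is still negative so the root location stands, and the kernel is $\{(x,y):\ x_1=x_2=0,\ \alpha=0,\ \beta=0\}$ of dimension $(m-3)+(m-1)=2m-4$, so your multiplicity $2m-4$ is right although the displayed description and the $(m-2)+(m-2)$ split are slightly off.
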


\begin{proof}
It is similar to the proof of the previous Proposition~\ref{Km1m2-e}. Arguing similarly as before, one can show that $\lambda^{(4)}=-1$ is an eigenvalue with multiplicity one. The other nonzero eigenvalues are roots of the cubic equation $\lambda^2(1-\lambda) - m(m-2 -m\lambda)=0$ which can be transformed into a depressed cubic equation with a positive discriminant $\Delta$. Thus, it has three distinct real eigenvalues $\lambda^{(1),(2),(3)}$. Performing the standard asymptotic analysis, we conclude $\lambda_+(A)=\lambda^{(2)} = 1-2/m - 2/m^3 +O(m^{-4})$ as $m\to\infty$, as claimed.
\end{proof}

\begin{remark}
In \cite{FoPi} it is shown that for a bipartite graph $K_{m_1, m_2}$ of the order $m=m_1+m_2$ and the average valency $d$ of vertices, one has $\lambda_{m/2} - \lambda_{1+m/2}\le \sqrt{d}$. 
\end{remark}

We end this section with the following statement regarding the density of values of the spectral index $ \Lambda^{gap}$ in the class of complete bipartite graphs. 

\begin{proposition}
For every pair of real numbers $0\le \delta<\gamma<1$, there exist an order $m$ and a complete bipartite graph $K_{m_1, m_2}$ of the order $m=m_1+m_2$ such that \ $m-\gamma \le \Lambda^{gap}(K_{m_1, m_2}) \le m-\delta$.
\end{proposition}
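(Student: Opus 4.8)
The plan is to reduce the statement to an elementary counting problem about integers in an interval, using the closed form for the spectral gap of complete bipartite graphs. By Remark~\ref{Km1m2} we have $\Lambda^{gap}(K_{m_1,m_2}) = 2\sqrt{m_1 m_2}$, and the identity $4 m_1 m_2 = (m_1+m_2)^2 - (m_2-m_1)^2$ gives, with $m = m_1 + m_2$ and $d = m_2 - m_1 \ge 0$,
\begin{equation*}
\Lambda^{gap}(K_{m_1,m_2}) = \sqrt{m^2 - d^2}.
\end{equation*}
Note that $d$ and $m$ have the same parity, and conversely every integer $d$ with $0 \le d \le m-2$ and $d \equiv m \pmod 2$ comes from an admissible pair $m_1 = (m-d)/2 \ge 1$, $m_2 = (m+d)/2$.

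Next I would rewrite the desired double inequality $m - \gamma \le \sqrt{m^2 - d^2} \le m - \delta$ as a constraint on $d$. Since $0 \le \delta < \gamma < 1 \le m$, both $m - \delta$ and $m - \gamma$ are positive, so squaring is reversible and the inequality is equivalent to
\begin{equation*}
2m\delta - \delta^2 \le d^2 \le 2m\gamma - \gamma^2,
\qquad\text{i.e.}\qquad
\sqrt{2m\delta - \delta^2} \le d \le \sqrt{2m\gamma - \gamma^2}.
\end{equation*}
Thus it suffices to find, for some suitable $m$, an integer $d$ of the correct parity lying in the interval $I_m := \bigl[\sqrt{2m\delta - \delta^2},\ \sqrt{2m\gamma - \gamma^2}\bigr]$ and satisfying $d \le m - 2$.

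The key step is then a length estimate for $I_m$. Rationalizing,
\begin{equation*}
|I_m| = \frac{2m(\gamma - \delta) - (\gamma^2 - \delta^2)}{\sqrt{2m\gamma - \gamma^2} + \sqrt{2m\delta - \delta^2}},
\end{equation*}
whose numerator grows linearly in $m$ while the denominator is $O(\sqrt{m})$; hence $|I_m| \to \infty$ as $m \to \infty$. Fix any $m$ large enough that $|I_m| \ge 2$ and also $\sqrt{2m\gamma} \le m - 2$. An interval of length at least $2$ contains two consecutive integers, hence an integer $d$ with $d \equiv m \pmod 2$; moreover $0 \le d \le \sqrt{2m\gamma - \gamma^2} \le \sqrt{2m\gamma} \le m-2$, so the pair $m_1 = (m-d)/2$, $m_2 = (m+d)/2$ is admissible and, by the reduction above, satisfies $m - \gamma \le \Lambda^{gap}(K_{m_1,m_2}) \le m - \delta$, as required. (The boundary case $\delta = 0$ is covered as well: the lower endpoint of $I_m$ is then $0$, and one may simply take $d = 0$ or $d = 1$ according to the parity of $m$.)

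The only point needing mild care --- and the reason for demanding $|I_m| \ge 2$ rather than merely $|I_m| > 0$ --- is the parity of the chosen integer $d$ relative to $m$; once that is arranged, the remaining verifications are routine monotonicity and squaring.
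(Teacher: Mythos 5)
Your proof is correct, but it takes a genuinely different route from the paper. The paper's argument works on the side of the part sizes: it invokes the density of the fractional parts $\sqrt{n}-[\sqrt{n}]$ in $[0,1)$ (citing \cite{ElMc}), picks $m_2$ with $\sqrt{\delta}\le \sqrt{m_2}-[\sqrt{m_2}]\le\sqrt{\gamma}$, sets $m_1=[\sqrt{m_2}]^2$, and squares $\sqrt{\delta}\le\sqrt{m_2}-\sqrt{m_1}\le\sqrt{\gamma}$ to land $2\sqrt{m_1m_2}$ in $[m-\gamma,\,m-\delta]$. You instead parametrize by the order $m$ and the difference $d=m_2-m_1$, use $2\sqrt{m_1m_2}=\sqrt{m^2-d^2}$, and reduce the statement to finding an integer $d\equiv m\pmod 2$ in the interval $\bigl[\sqrt{2m\delta-\delta^2},\sqrt{2m\gamma-\gamma^2}\bigr]$, whose length grows like $\sqrt{m}$; the parity and admissibility checks ($d\le m-2$, squaring reversible since $m-\gamma>0$) are all handled correctly, including the boundary case $\delta=0$. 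What your approach buys is self-containedness and a stronger conclusion: it needs no equidistribution or density input at all, only the pigeonhole fact that a length-$2$ interval contains two consecutive integers, and it shows the prescribed gap window can be realized at \emph{every} sufficiently large order $m$, not just along the special orders $m=[\sqrt{m_2}]^2+m_2$ produced by the paper's construction. What the paper's approach buys is brevity (a three-line reduction once the density fact is granted) and a direct tie-in to the number-theoretic literature it wishes to cite; it also sidesteps the parity discussion entirely because it chooses $m_1$ and $m_2$ directly rather than through $m$ and $d$.
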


\begin{proof}
Recall the known fact (see, e.g. \cite{ElMc}) that the set of fractional parts $\sqrt{m} - [\sqrt{m}]$ of roots of all positive integers $m$ is dense in the interval $[0,1)$. Hence, there exists an integer $m_2$, such that $\sqrt{\delta}\le \sqrt{m_2}- [\sqrt{m_2}]\le \sqrt{\gamma}$. Take $m_1:=[\sqrt{m_2}]^2 \le m_2$. Then $\sqrt{\delta}\le \sqrt{m_2}- \sqrt{m_1}\le \sqrt{\gamma}$. By squaring and rearranging terms, we obtain $(m_1+m_2)-c\le 2\sqrt{m_1 m_2} \le (m_1+m_2) -d$. Now we take the bipartite graph $K_{m_1,m_2}$, of order $m=m_1+m_2$. Since $\Lambda^{gap}(K_{m_1, m_2}) = 2\sqrt{m_1 m_2}$ the claim follows.
\end{proof}

\subsection{ Indices for noncomplete graphs}

The purpose of this section is to analyze  indices for noncomplete multipartite graphs. 

\begin{proposition}
If $G_A$ is a bipartite but not complete bipartite graph, with the average vertex degree $d$, and the multiplicity of the zero eigenvalue of the order $k$, then 
\begin{equation}
\label{eq:bip} 
\Lambda^{gap}(G_A) \le 2\sqrt{\frac{d(m-2d)}{m-k-2}}\ .
\end{equation}  

\end{proposition}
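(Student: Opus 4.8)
The plan is to combine the sign-symmetry of the spectrum of a bipartite graph with the trace identity $\mathrm{trace}(A^2)=\sum_i\deg(i)$. First I would recall that the adjacency matrix $A$ of a bipartite graph has spectrum symmetric about the origin: if $\lambda\in\sigma(A)$ then $-\lambda\in\sigma(A)$ with the same multiplicity. In particular $\lambda_-(A)=-\lambda_+(A)$, so that $\Lambda^{gap}(G_A)=\lambda_+(A)-\lambda_-(A)=2\lambda_+(A)$, and it suffices to bound the smallest positive eigenvalue $\lambda_+(A)$ from above.

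Next I would count eigenvalues and check that the denominator in (\ref{eq:bip}) is legitimate. Since $0$ is an eigenvalue of multiplicity $k$, there are $m-k$ nonzero eigenvalues, which by the symmetry split into $(m-k)/2$ positive and $(m-k)/2$ negative ones. Writing $A=\left(\begin{smallmatrix}0&B\\ B^T&0\end{smallmatrix}\right)$ with respect to the bipartition, we have $\mathrm{rank}(A)=2\,\mathrm{rank}(B)$; for a connected graph, $\mathrm{rank}(B)=1$ would force $B$ to be an all-ones block, i.e. $G_A$ to be complete bipartite, which is excluded by hypothesis. Hence $m-k=\mathrm{rank}(A)\ge 4$, so there are at least two positive eigenvalues and $(m-k)/2-1\ge 1$.

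Then I would use $\mathrm{trace}(A^2)=\sum_{i=1}^m\deg(i)=md$ (the $m$ vertices have average degree $d$), which together with the spectral symmetry gives $\sum_{\lambda\in\sigma_+(A)}\lambda^2=md/2$. Isolating the Perron root $\lambda_1=\lambda_{max}$ and bounding every other positive eigenvalue from below by $\lambda_+(A)$ yields
\[
\frac{md}{2}=\lambda_1^2+\sum_{0<\lambda<\lambda_1}\lambda^2\ \ge\ \lambda_1^2+\Big(\tfrac{m-k}{2}-1\Big)\lambda_+(A)^2 .
\]
Combining this with the standard Rayleigh-quotient estimate $\lambda_1\ge \mathbf{1}^TA\mathbf{1}/\mathbf{1}^T\mathbf{1}=d$ gives $\big(\tfrac{m-k}{2}-1\big)\lambda_+(A)^2\le md/2-d^2=d(m-2d)/2$, hence $\lambda_+(A)^2\le d(m-2d)/(m-k-2)$; taking square roots and multiplying by $2$ yields (\ref{eq:bip}). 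Note that $m-2d\ge 0$, since a bipartite graph on $m$ vertices has at most $m^2/4$ edges and therefore $md/2\le m^2/4$, so the right-hand side is real.

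The genuinely delicate point — and the only place the hypothesis that $G_A$ is not complete bipartite is used — is guaranteeing at least two positive eigenvalues, so that the denominator $m-k-2$ is strictly positive; I expect the rank argument of the second paragraph to be the crux, while the remaining steps are just the trace identity and the elementary bound $\lambda_1\ge d$.
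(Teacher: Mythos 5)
Your proof is correct and follows essentially the same route as the paper's: bipartite spectral symmetry gives $\Lambda^{gap}=2\lambda_{+}(A)$, the trace identity $\mathrm{trace}(A^2)=md$ combined with $\lambda_1\ge d$ bounds $\lambda_{+}(A)^2$, and counting the $(m-k)/2$ positive eigenvalues produces the factor $m-k-2$. Your rank argument showing $\mathrm{rank}(A)=2\,\mathrm{rank}(B)\ge 4$ merely fills in the step the paper asserts without proof (that non-complete-bipartiteness forces $k\le m-4$), so it is a welcome detail rather than a different method.
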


\begin{proof}
Let $G_A$ be a bipartite but not complete bipartite graph with adjacency matrix $A$ having null space of dimension $k$. Since $G_A$ is not complete bipartite, we have $k\le m-4$. It follows that $m$ and $k$ have the same parity, so that $m-k=2r$ for some positive integer $r\ge 2$. By bipartiteness of $G_A$ we may assume that its eigenvalues have the form $\lambda_1\ge \lambda _2 \ge \ldots \ge \lambda_r > 0=\lambda_{r+1}= \ldots = \lambda_{r+k} > -\lambda_r \ge \ldots\ge -\lambda_2\ge -\lambda_1$, so that $\lambda_{+}=\lambda_r$ and $\lambda_{-} = -\lambda_r$. The earlier used fact that $\lambda_1\ge d$ trivially implies that 
\begin{equation}\label{eq:rs1a}
\sum_{i=1}^r\lambda_i^2 \ge d^2 + (r-1)\lambda_{+}^2\ \ .
\end{equation} 

It is well known that the sum of squares $\sum_{i=1}^m\lambda_i^2 = trace(A^2)= m d$, where $d$ is the average valency of vertices of $G_A$, that is, $m d/2$ is the number of edges in the graph $G_A$ (cf. Bapat \cite{Bapat2010}). Combined with the inequality $\lambda_1\ge d$ used earlier, we obtain
\begin{equation}\label{eq:rs1}
m d = 2\sum_{i=1}^r\lambda_i^2 \ge 2d^2 + 2(r-1)\lambda_{+}^2 = 2d^2 + (m-k-2)\lambda_{+}^2  
\end{equation}
and evaluation of $\lambda_{+}(A)$ from (\ref{eq:rs1}) gives $\lambda_{+}(A) = -\lambda_{-}(A) \le \sqrt{\frac{d(m-2d)}{m-k-2}}$ which implies the inequality (\ref{eq:bip}) in our statement.
\end{proof}

\begin{remark}
The estimate (\ref{eq:bip}) is nearly optimal. For example, for the graph $K_{m_1,m_1}^{-e}$ we have $m=2m_1$, $d= m_1- \frac{1}{m_1}$ and $k=m-4$, and (\ref{eq:bip}) for these values gives $\Lambda^{gap}(K_{m_1, m_1}^{-e})\le 2\sqrt{1-4/m^2}$, which is a slightly worse estimate than the one derived in the analysis of the spectrum of $K_{m_1,m_1}^{-e}$. 
\end{remark}

Finally, we show that the maximal (minimal) eigenvalue can increase (decrease) by adding one vertex to the original graph. 

\begin{proposition}\label{increaselambdamax}
Assume $G_A$ is a simple connected graph on the vertices $m$ with the maximal and minimal eigenvalues $\lambda_{max}(A)$, and $\lambda_{min}(A)$. Then there exists a graph $G_{\mathscr{A}}$ on the $m+1$ vertices constructed from $G_A$ by adding one vertex connected to each of the vertices $G_A$ that has the maximal eigenvalue such that 
\[
\lambda_{max}(\mathscr{A}) \ge  \frac{\lambda_{max} (A)+\sqrt{ (\lambda_{max}(A))^2 + 4}}{2}.
\]
Similarly, there exists a vertex $i_0$ of $G_A$ such that the graph $G_{\mathscr{A}}$ on $m+1$ vertices constructed from $G_A$ by adding a pendant vertex to the vertex $i_0$ has the minimal eigenvalues satisfying the estimate
\[
\lambda_{min}(\mathscr{A}) \le  \frac{\lambda_{min}(A) - \sqrt{ (\lambda_{min}(A))^2 + 4/m }}{2} .
\]
\end{proposition}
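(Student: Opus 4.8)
The plan is to treat both assertions uniformly by realizing $G_A$ as an orthogonal compression of $G_{\mathscr A}$ and invoking eigenvalue interlacing (the Cauchy/Poincar\'e separation theorem for orthogonal compressions). Write the adjacency matrix of the enlarged graph in block form: attaching a vertex adjacent to every vertex of $G_A$ gives $\mathscr A=\left(\begin{smallmatrix} A & \mathbf 1\\ \mathbf 1^T & 0\end{smallmatrix}\right)$, while attaching a pendant vertex at the vertex $i_0$ gives $\mathscr A=\left(\begin{smallmatrix} A & e_{i_0}\\ e_{i_0}^T & 0\end{smallmatrix}\right)$, where $\mathbf 1=(1,\dots,1)^T$ and $e_{i_0}$ is the $i_0$-th standard basis vector in $\mathbb R^m$. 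In both cases I would compress $\mathscr A$ to the two-dimensional subspace of $\mathbb R^{m+1}$ spanned by $(v,0)$ and $(0,1)$, where $v$ is a suitable unit eigenvector of $A$; this pair is orthonormal, so by the separation theorem the eigenvalues of the resulting $2\times 2$ matrix $M$ interlace those of $\mathscr A$, and in particular $\lambda_{max}(\mathscr A)\ge\lambda_{max}(M)$ and $\lambda_{min}(\mathscr A)\le\lambda_{min}(M)$.

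For the first estimate, take $v=x$ to be a unit Perron eigenvector of $A$, so $Ax=\lambda_{max}(A)x$ with $x>0$ by connectedness of $G_A$ and the Perron--Frobenius theorem; set $s=\mathbf 1^T x=\sum_i x_i$. A direct computation of the compression gives $M=\left(\begin{smallmatrix}\lambda_{max}(A) & s\\ s & 0\end{smallmatrix}\right)$, whose larger eigenvalue equals $\tfrac12\bigl(\lambda_{max}(A)+\sqrt{\lambda_{max}(A)^2+4s^2}\bigr)$. The remaining point is the scalar bound $s\ge 1$: since $x_i\ge 0$ and $\sum_i x_i^2=1$, we have $s^2=\sum_i x_i^2+2\sum_{i<j}x_ix_j\ge 1$. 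As the above expression is increasing in $s$, substituting $s\ge 1$ yields exactly the claimed lower bound for $\lambda_{max}(\mathscr A)$.

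For the second estimate, take $v=y$ a unit eigenvector of $A$ for $\lambda_{min}(A)$, and choose $i_0$ to be an index maximizing $y_{i_0}^2$; by the pigeonhole principle $y_{i_0}^2\ge 1/m$ because $\sum_i y_i^2=1$. The compression is now $M=\left(\begin{smallmatrix}\lambda_{min}(A) & y_{i_0}\\ y_{i_0} & 0\end{smallmatrix}\right)$, with smaller eigenvalue $\tfrac12\bigl(\lambda_{min}(A)-\sqrt{\lambda_{min}(A)^2+4y_{i_0}^2}\bigr)$, so that $\lambda_{min}(\mathscr A)\le\lambda_{min}(M)$. This last quantity is decreasing in $y_{i_0}^2$, so replacing $y_{i_0}^2$ by the smaller value $1/m$ only enlarges it; hence $\lambda_{min}(\mathscr A)\le\tfrac12\bigl(\lambda_{min}(A)-\sqrt{\lambda_{min}(A)^2+4/m}\bigr)$, as asserted.

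The computations involved are routine; the only steps needing genuine care are the two scalar inequalities that are fed into the $2\times2$ eigenvalue formulas — $\sum_i x_i\ge 1$, which relies on nonnegativity of the Perron vector (hence on connectedness), and $\max_i y_i^2\ge 1/m$ — together with applying interlacing in the correct direction (the compressed matrix bounds $\lambda_{max}$ from below and $\lambda_{min}$ from above). If one prefers to avoid the separation theorem, the same bounds follow by plugging the explicit test vector $(x,t)$ (resp. $(y,t)$) into the Rayleigh quotient of $\mathscr A$ and optimizing over the single scalar $t$, which reproduces precisely the larger (resp. smaller) eigenvalue of the same $2\times2$ matrix.
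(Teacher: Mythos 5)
Your proof is correct and is essentially the paper's argument in different packaging: the $2\times 2$ compression to $\mathrm{span}\{(v,0),(0,1)\}$ with its eigenvalue formula is exactly the paper's Rayleigh-quotient computation with the eigenvector of $A$ and the scalar $\xi$ optimized (as you yourself note at the end), and you use the same two key scalar bounds, namely $\mathbf{1}^T x\ge 1$ from nonnegativity of the Perron vector and $\max_i y_{i}^2\ge 1/m$. No gaps; the interlacing phrasing is a clean equivalent of the paper's explicit maximization of $(1+\alpha\xi)/(1+\xi^2)$.
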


\begin{proof}
The sum of all eigenvalues of the symmetric matrix $A$ is zero because the trace of $A$ is zero. Hence $\lambda_{min}(A)<0<\lambda_{max}(A)$. Let $\mathscr A$ be the $(m+1)\times (m+1)$ adjacency matrix of the graph $G_{\mathscr{A}}$ obtained from $G_A$ by adding a vertex connected to a subset of vertices of $G_A$. Its adjacency matrix $\mathscr{A}$
has the block form
\begin{equation}
\mathscr{A} = \left(
\begin{array}{cc}
A & e\\
e^T & 0
\end{array}
\right),
\label{blockmatrixGk}
\end{equation}
where $e=(e_1, \dots, e_m)^T$, $e_{i}\in \{0,1\}$. The maximal eigenvalue $\lambda_{max}(\mathscr{A})$ can be computed by means of the Rayleigh ratio, i.e. 
\[
\lambda_{max}(\mathscr{A}) 
= 
\max_{x\in\R^m, \xi\in \R} 
\frac{
(x^T,\xi) \left(
\begin{array}{cc}
A & e\\
e^T & 0
\end{array}
\right)  \left(
\begin{array}{c}
x\\
\xi
\end{array}
\right)}
{|x|^2 +\xi^2}
=
\max_{x\in\R^m, \xi\in \R} 
\frac{ x^T A x + 2(e^T x) \xi}
{|x|^2 +\xi^2} ,
\]
where $|x|$ is the Euclidean norm of the vector $x$. Let $\hat x$ be an eigenvector for corresponding to the maximal eigenvalue $\lambda_{max}(A)$, that is, $A \hat x =  \lambda_{max} (A)\hat x$. Then 
\[
\lambda_{max}(\mathscr{A}) 
\ge 
\max_{\xi\in \R} 
\frac{\lambda_{max} (A)+ 2 (e^T \hat{x}) \xi}{1 +\xi^2}
=
\lambda_{max} (A)\max_{\xi\in \R} 
\frac{1 + \alpha \xi}{1 +\xi^2},
\]
where $\alpha = 2(e^T \hat{x})/\lambda_{max}(A)$.  Let us introduce the auxiliary function $\psi:\R\to\R$, $\psi(\xi)= (1+\alpha\xi)/(1+\xi^2)$, where $\alpha\in\R$ is a parameter. Using the first-order necessary condition it is easy to verify that the maximum of the function $\psi$ is attained at $\xi= (-1+\sqrt{1+\alpha^2})/\alpha$. As a consequence, we have
\[
\max_\xi \frac{1+\alpha\xi}{1+\xi^2} = \frac{1+\sqrt{1+\alpha^2}}{2} >0.
\]
Notice that the adjacency matrix contains only nonnegative elements. With regard to the Perron-Frobenius theorem, an eigenvector corresponding to the maximal eigenvalue $\lambda_{max}(A)$ is nonnegative, i.e. $\hat x\ge 0$. Consider the vector $e=(1, \dots, 1)^T$ consisting of ones. It corresponds to the new vertex connected to all the vertices of $G_A$. Then $(e^T \hat{x})^2 = (\hat x_1 + \dots + \hat x_m)^2 \ge |\hat x|^2=1$ because all $\hat x_i\ge 0$ are nonnegative.  Inserting the parameter $\alpha^2= 4(e^T \hat x)^2/(\lambda_{max}(A))^2 \ge 4 /(\lambda_{max}(A))^2$ we obtain $\lambda_{max}(\mathscr{A}) \ge  \frac12 (\lambda_{max} (A)+\sqrt{ (\lambda_{max}(A))^2 + 4 })$, as claimed.

Similarly, let $\bar{x}$ be the unit eigenvector corresponding to the minimal eigenvalue $\lambda_{min}(A)$, that is, $A \bar{x} = \lambda_{min}(A) \bar{x}, |\bar{x}|=1$.
Let $i_0$ be the index such that $|\hat x_{i_0}| = \max_i |\hat x_i |$. Since $|\hat x| =1$ we have $|\hat x_{i_0}|\ge 1/\sqrt{m}$. Assume that the graph $G_{\mathscr{A}}$ is constructed from $G_A$ by adding one vertex connected to the vertex $i_0$. That is $e=(e_1, \dots, e_m)^T$, $e_{i_0}=1$, and $e_i=0$ for $i\not=i_0$. Then $(e^T \hat x)^2 = (\hat x_{i_0})^2\ge 1/m$. Hence 
\[
\lambda_{min}(\mathscr{A}) 
= 
\min_{x\in\R^m, \xi\in \R} 
\frac{ x^T A x + 2(e^T x) \xi}
{|x|^2 +\xi^2} 
\le 
\min_{\xi\in \R} 
\frac{ \lambda_{min}(A) + 2(e^T \bar{x}) \xi}
{1 +\xi^2} 
=
\lambda_{min}(A) \max_{\xi\in \R} 
\frac{ 1 + \alpha \xi}
{1 +\xi^2}
\]
because $\lambda_{min}(A)<0$. Here $\alpha = 2(e^T \bar{x})/\lambda_{min}(A)$. Consider the index $i_0$ for which $|x_{i_0}|$ is maximal. Then $(\bar{x}_0)^2 \ge 1/m$, and 
\[
\lambda_{min}(\mathscr{A}) \le \lambda_{min}(A) \frac{1+\sqrt{1+\alpha^2}}{2}
\le \frac{\lambda_{min}(A) - \sqrt{(\lambda_{min}(A))^2 + 4/m}}{2},
\]
and the proof of the proposition follows.
\end{proof}

\section{Statistical properties of  indices}

The purpose of this section is to report statistical results on maximal (minimal) eigenvalues, and  indices for the class of all simple connected graphs on $m\le 10$ vertices. 
In Table~\ref{tab-spektrumall} the operators $E,\sigma, {\mathcal S}$ and ${\mathcal K}$ represent the mean value, standard deviation, skewness and kurtosis of the corresponding sets of eigenvalues $\lambda_{max}$, and $\lambda_{min}$, respectively. For larger $m$ the skewness ${\mathcal S}(\lambda_{max})$ approaches zero and the kurtosis ${\mathcal K}(\lambda_{max})$ tends to $3$ meaning that the distribution of maximal eigenvalues of all simple connected graphs on the $m$ vertices becomes normally distributed as $m$ increases. The skewness ${\mathcal S}(\lambda_{min})<0$ is negative and the kurtosis ${\mathcal K}(\lambda_{min})>3$ meaning that the distribution of minimal eigenvalues of connected graphs on the $m$ vertices is skewed to the left. It has fat tails (leptokurtic distribution) because it has positive excess kurtosis ${\mathcal K}(\lambda_{min})-3 >0$ as $m$ increases.
We employed the list of all simple connected graphs due to B. McKay which is available at the repository \cite{McKay}. We calculated the spectra for all graphs and the corresponding  indices. Calculating  indices for $m=10$ is a computationally complex task, since the number $11716571$ of all simple connected graphs is very large. To our knowledge, a consolidated list of connected nonisomorphic graphs is not available for orders $m\ge 11$. 

\begin{table}
\caption{\small Descriptive statistics of the maximal(minimal) eigenvalues $\lambda_{max}$ ($\lambda_{min}$), spectral gap $\Lambda^{gap}$, spectral index $\Lambda^{ind}$, and spectral power $\Lambda^{pow}$ for all simple connected graphs on $m\le 10$ vertices.}
\scriptsize
\begin{center}
\hglue -0.5truecm \begin{tabular}{l||l|l|l|l|l|l|l|l|l}
\hline
$m$  & $2$ & $3$ & $4$ & $5$ & $6$ & $7$ & $8$ & $9$ & $10$\\
total \# & $1$   & $2$   & $6$   &  $21$ & $112$ & $853$ &$11117$&$261080$& $11716571$\\
\hline\hline
$E(\lambda_{max})$             & 1 &  1.7071 & 2.1802 & 2.6417 & 3.0582 & 3.4856 & 3.9288 & 4.4001 & 4.8895\\
$\sigma(\lambda_{max})$        & 0 &  0.4142 & 0.5228 & 0.5968 & 0.6368 & 0.6562 & 0.6595 & 0.6529 & 0.6471\\
${\mathcal S}(\lambda_{max})$  & - &  0 & 0.5096 & 0.5171 & 0.4142 & 0.2855 & 0.1536 & 0.0608 & 0.0132\\
${\mathcal K}(\lambda_{max})$  & - &  1 & 1.9715 & 2.6351 & 2.9901 & 3.0804 & 3.0578 & 3.0313 & 3.0096\\
$\max(\lambda_{max})$          & 1 &  2        & 3        & 4        & 5        & 6        & 7        & 8        & 9\\
$\min(\lambda_{max})$          & 1 &  1.4142 & 1.6180 & 1.7321 & 1.8019 & 1.8478 & 1.8794 & 1.9021 & 1.9190\\
\hline
$E(\lambda_{min})$             & -1 & -1.2071 & -1.5655 & -1.7911 & -2.0302 & -2.2264 & -2.4191 & -2.6018 & -2.7756\\
$\sigma(\lambda_{min})$        &  0 &  0.2929 &  0.3305 &  0.2981 &  0.3012 &  0.2995 &  0.2994 &  0.2915 &  0.2832\\
${\mathcal S}(\lambda_{min})$  & -  &  0 &  0.5740 &  0.2506 & -0.4079 & -0.5438 & -0.4937 & -0.4121 & -0.3927\\
${\mathcal K}(\lambda_{min})$  & -  &  1 &  2.7899 &  4.2278 &  4.1917 &  3.5318 &  3.3933 &  3.3626 &  3.3289\\
$\max(\lambda_{min})$          & -1 & -1        & -1        & -1        & -1        & -1        & -1        & -1        & -1\\
$\min(\lambda_{min})$          & -1 & -1.4142 & -2        & -2.4495  & -3        & -3.4641 & -4        & -4.4721 & -5\\
\hline
$\max(\Lambda^{gap})$ & 2 & 3 & 4 & 5 & 6 & 7 & 8       & 9      & 10 \\
$\min(\Lambda^{gap})$ & 2 & 2.8284 & 1.2360 & 1.0806 & 0.7423 & 0.6390 & 0.3468 & 0.2834 & 0.1565 \\
$\max(\Lambda^{ind})$ & 1 & 2 & 3 & 4 & 5 & 6 & 7       & 8      & 9 \\
$\min(\Lambda^{ind})$ & 1 & 1.4142 & 0.6180 & 0.6180 & 0.4142 & 0.3573 & 0.1826 & 0.1502 & 0.0841 \\
$\max(\Lambda^{pow})$      & 2 & 4 & 6 & 8 &10 &12 & 14.3253 & 17.0600& 20 \\
$\min(\Lambda^{pow})$      & 2 &2.8284 & 3.4642 & 4.0000 & 4.4722 & 4.8990 & 5.2916 & 5.6568 & 6.0000 \\
\hline
\end{tabular}
\end{center}
\label{tab-spektrumall}
\end{table}

\begin{figure}
    \centering
    
    \includegraphics[width=.3\textwidth]{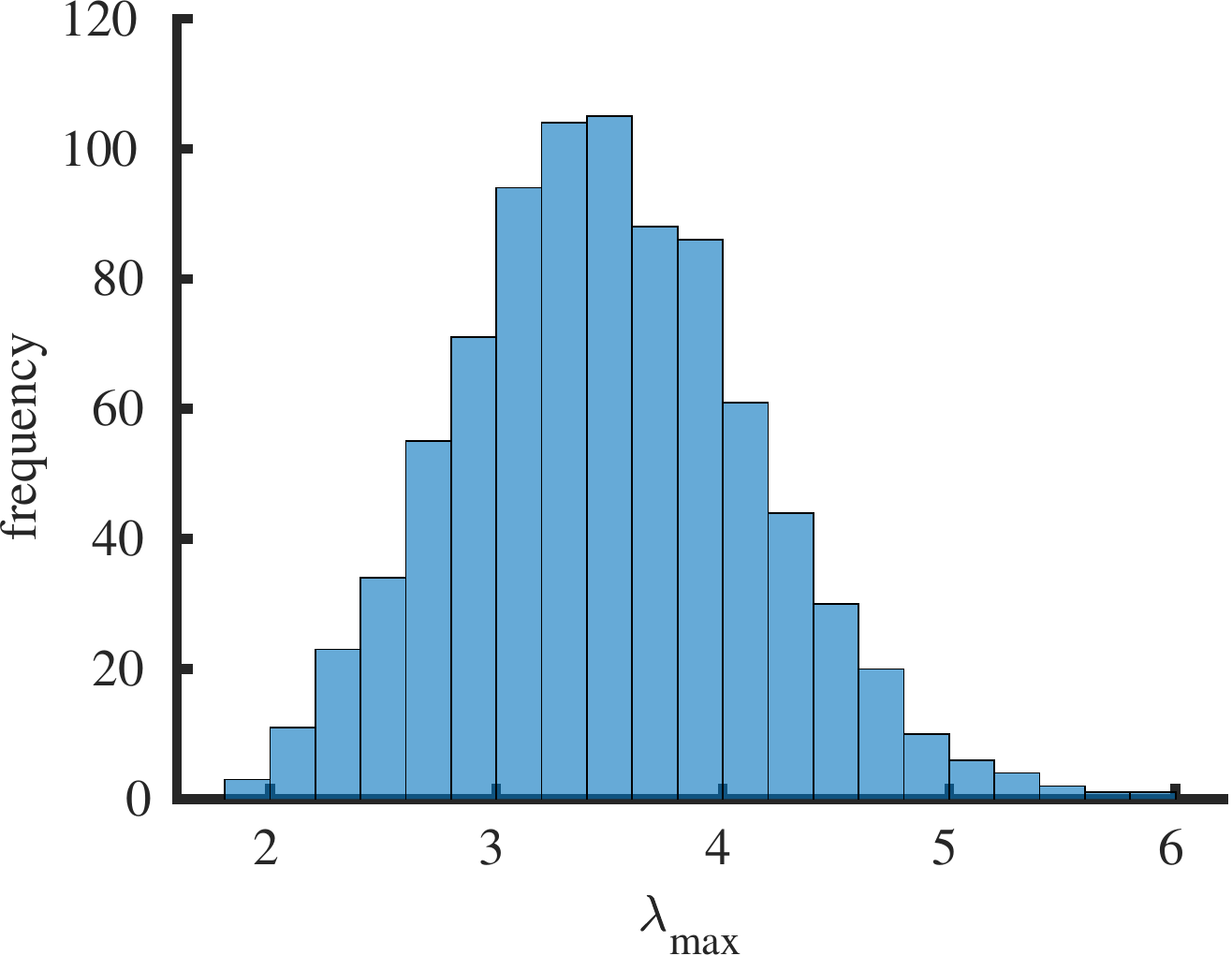}
    \quad
    \includegraphics[width=.3\textwidth]{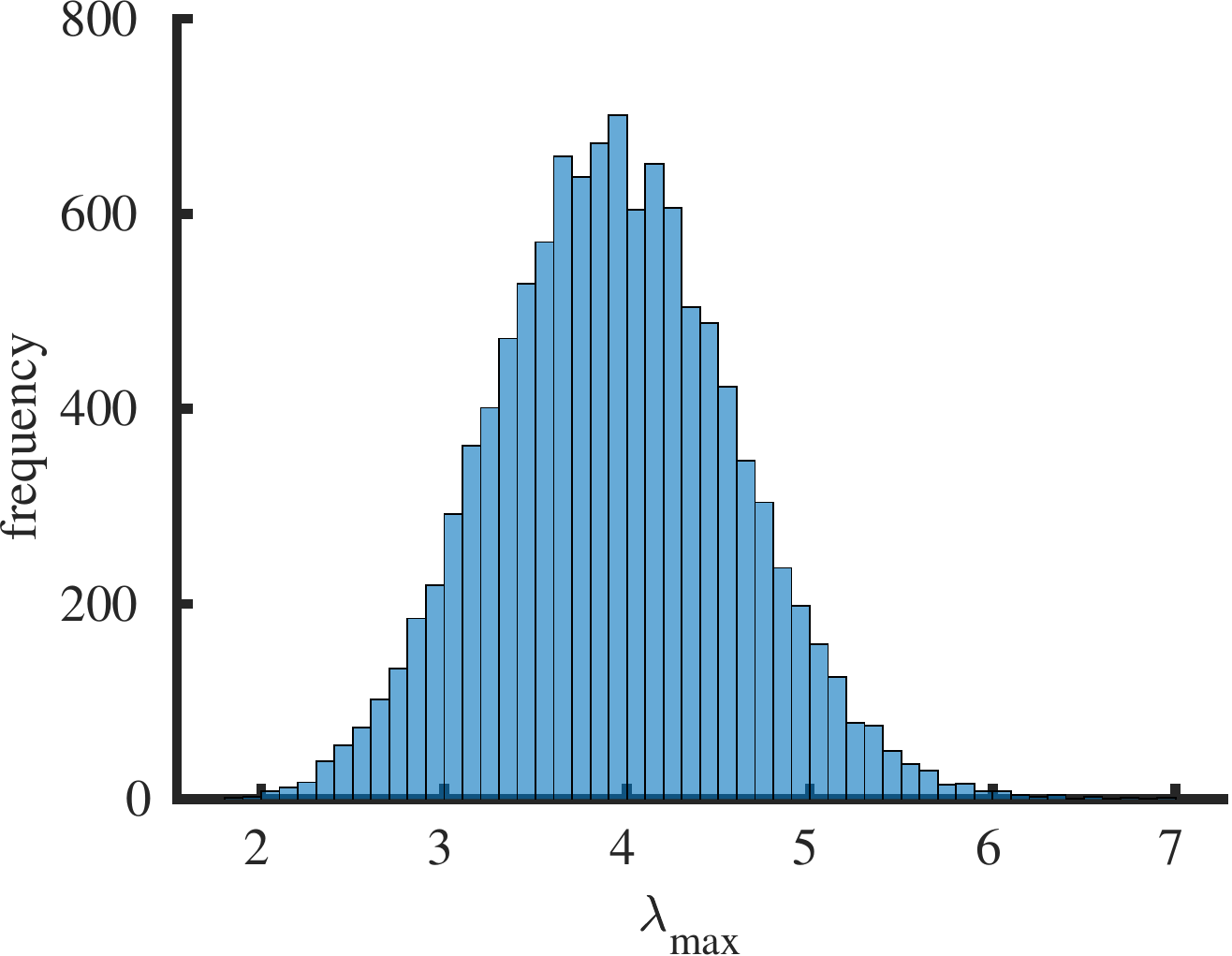}
    \quad
    \includegraphics[width=.3\textwidth]{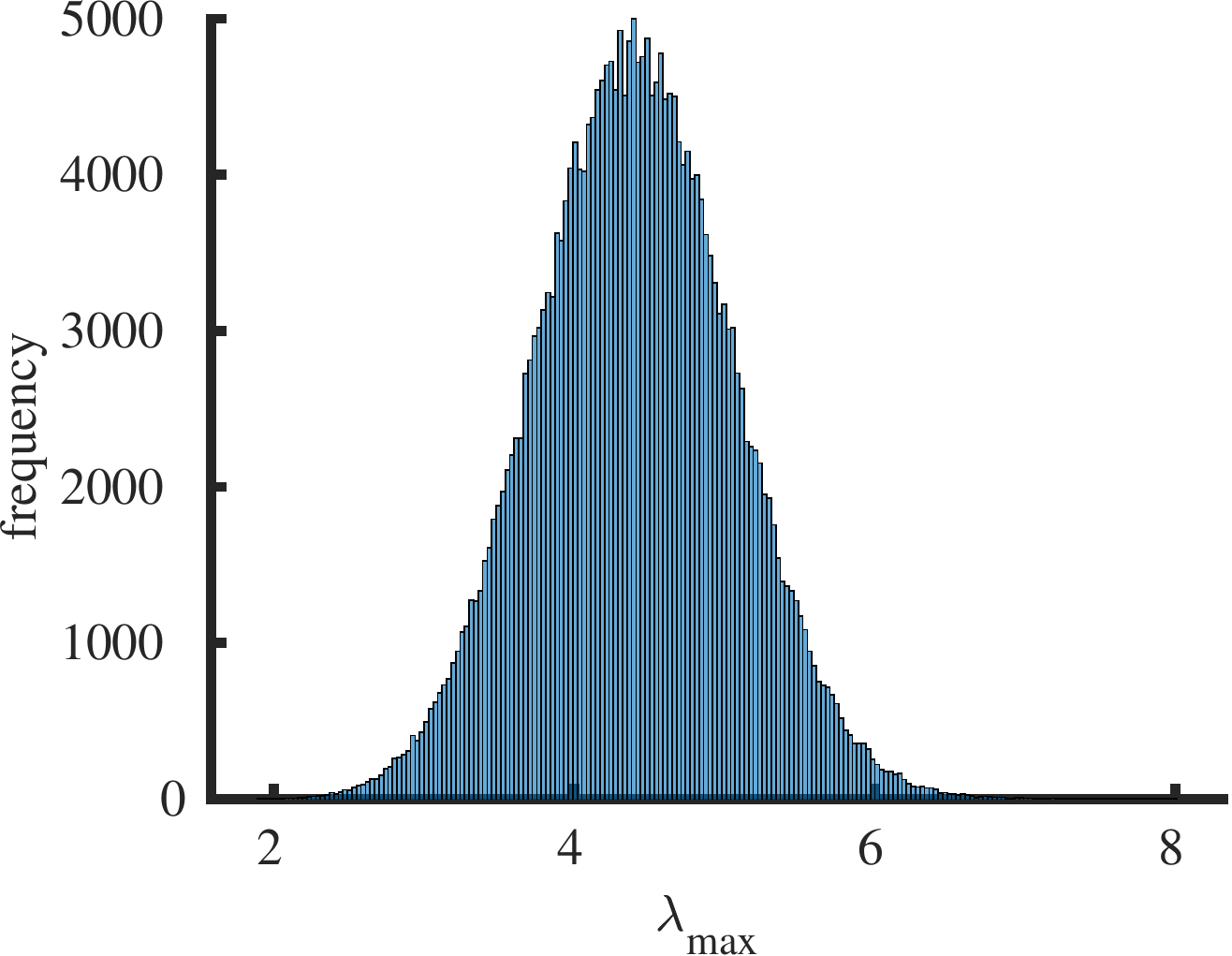}
    
{\small $m=7$ \hskip 4truecm $m=8$ \hskip 4truecm $m=9$}

\vskip 0.5truecm 

    \includegraphics[width=.3\textwidth]{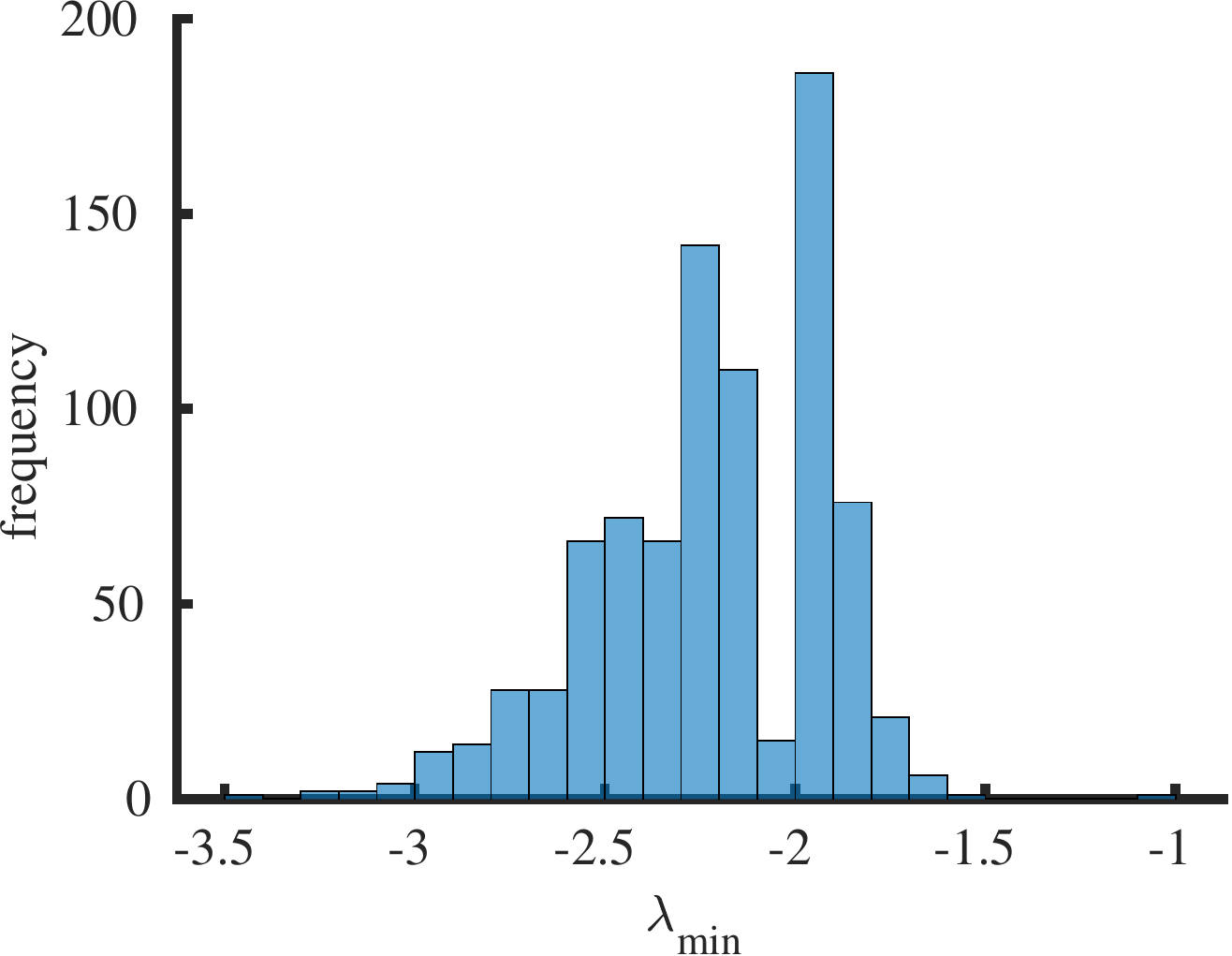}
    \quad
    \includegraphics[width=.3\textwidth]{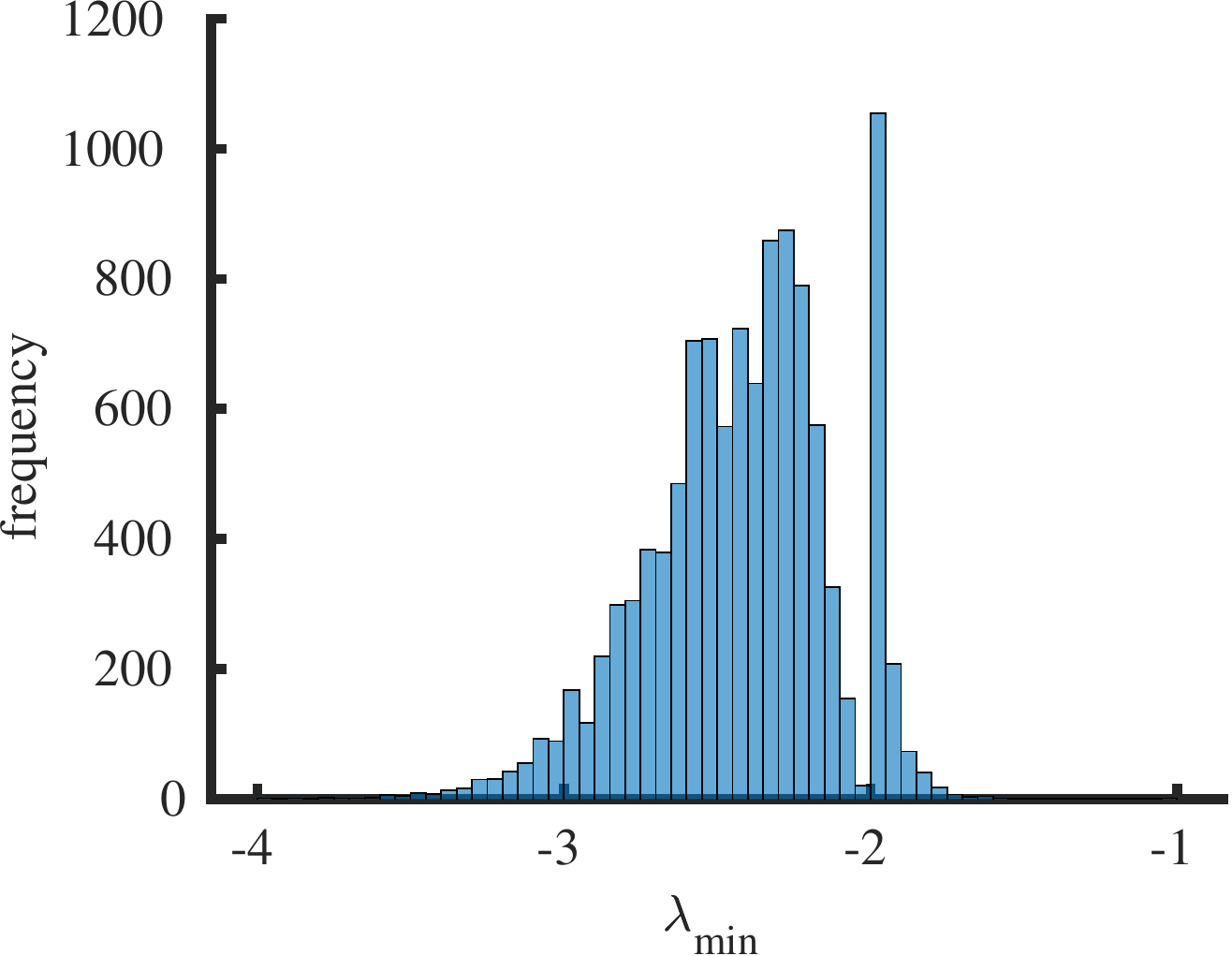}
    \quad
    \includegraphics[width=.3\textwidth]{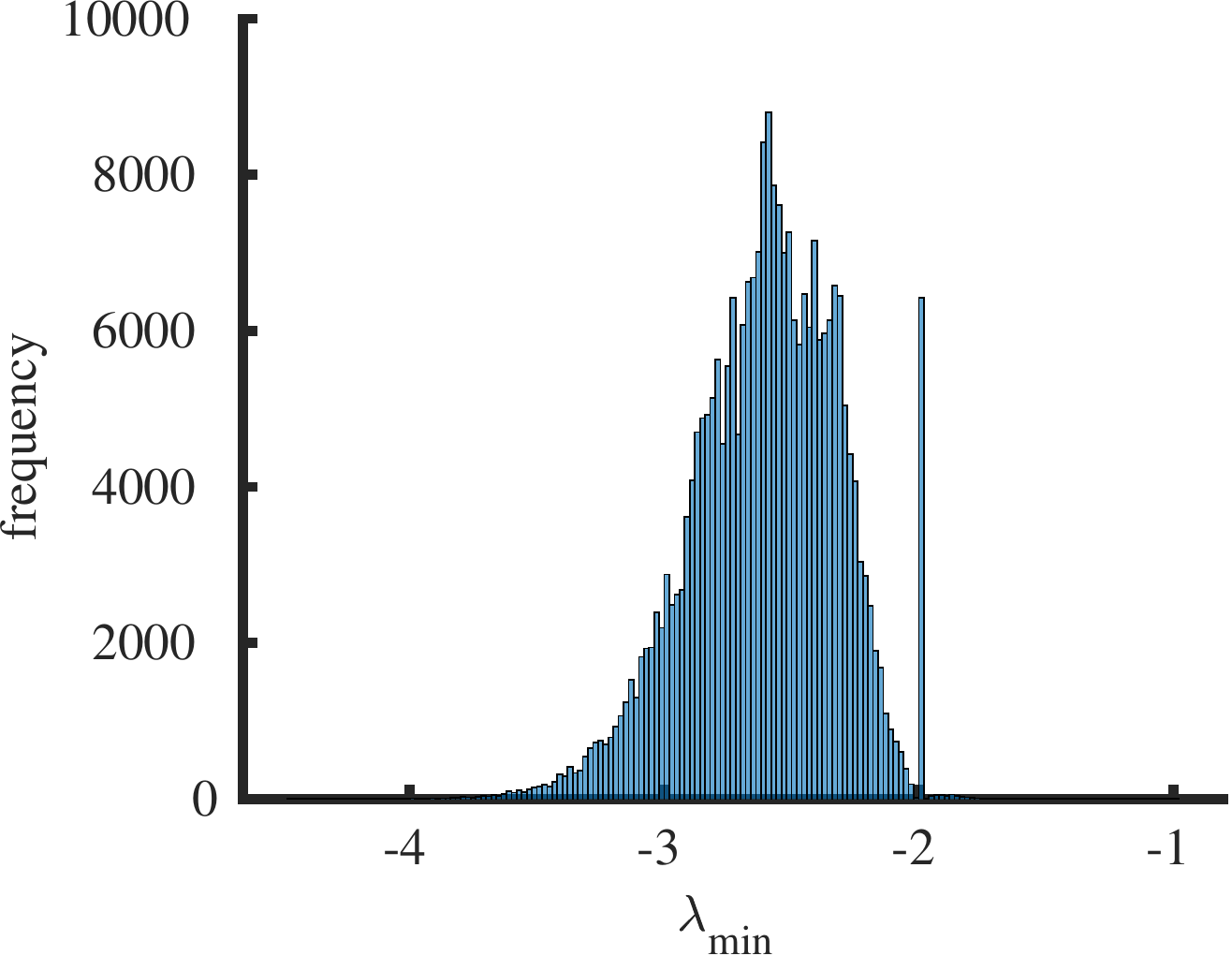}

{\small $m=7$ \hskip 4truecm $m=8$ \hskip 4truecm $m=9$}

    \caption{\small Histograms of distribution of maximal (top row) and minimal (bottom row) eigenvalues for all simple connected graphs on $7\le m\le 9$ vertices. For their statistical properties, see Table~\ref{tab-spektrumall}.}
    \label{fig-graf-maxlambdamaxmin-7-8-9}
\end{figure}

\begin{figure}
    \centering

    \includegraphics[width=.3\textwidth]{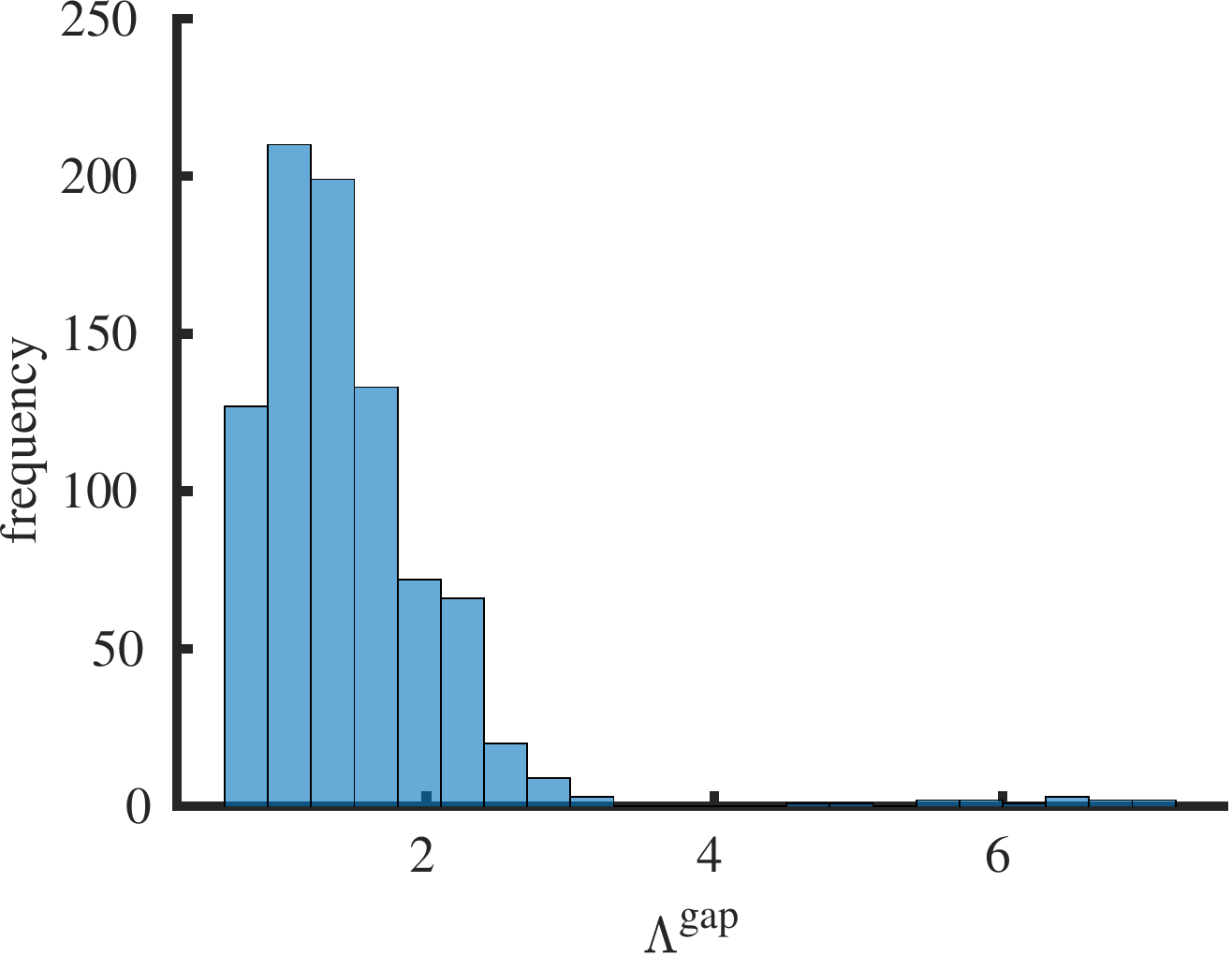}
    \quad
    \includegraphics[width=.3\textwidth]{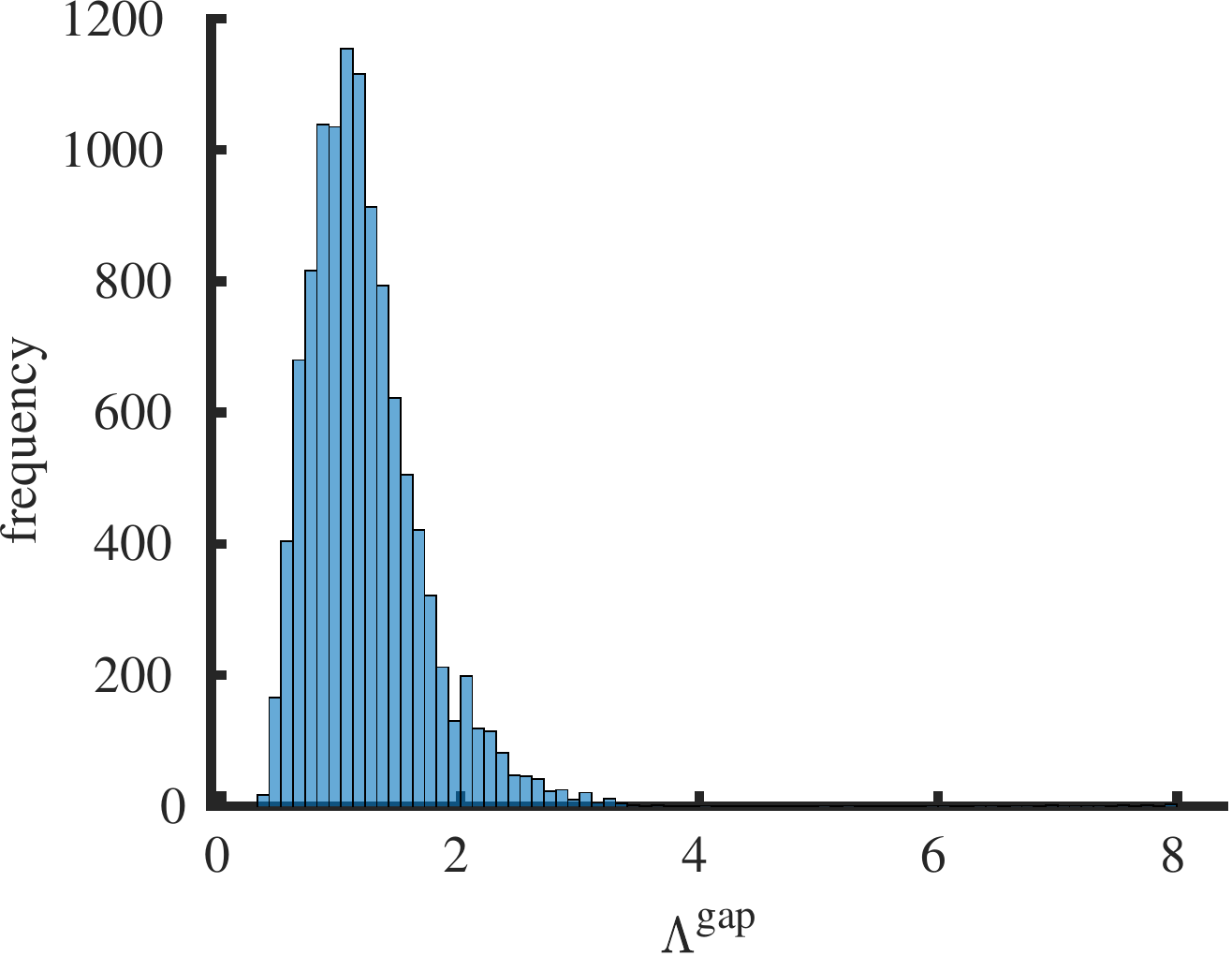}
    \quad
    \includegraphics[width=.3\textwidth]{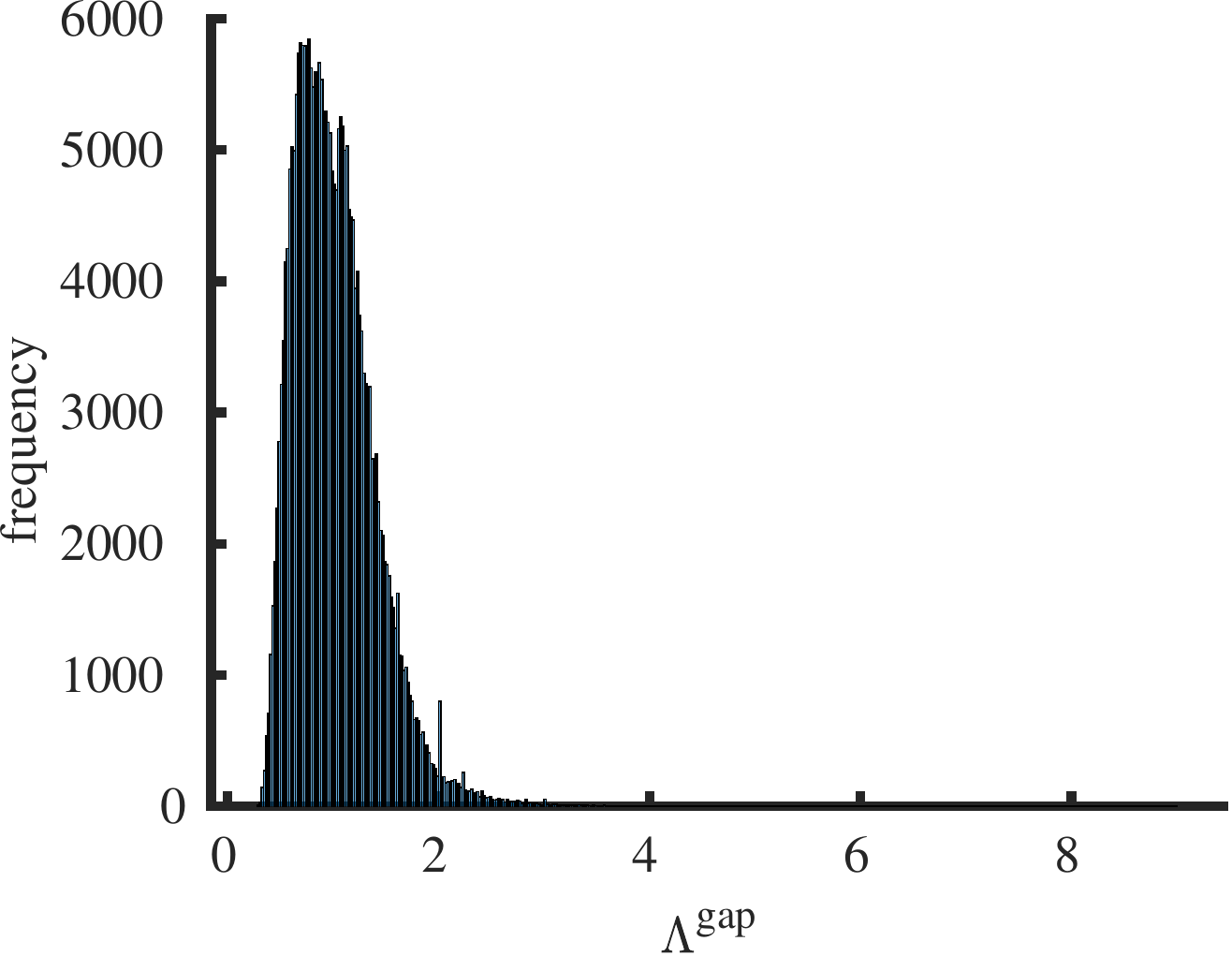}
    
    \includegraphics[width=.3\textwidth]{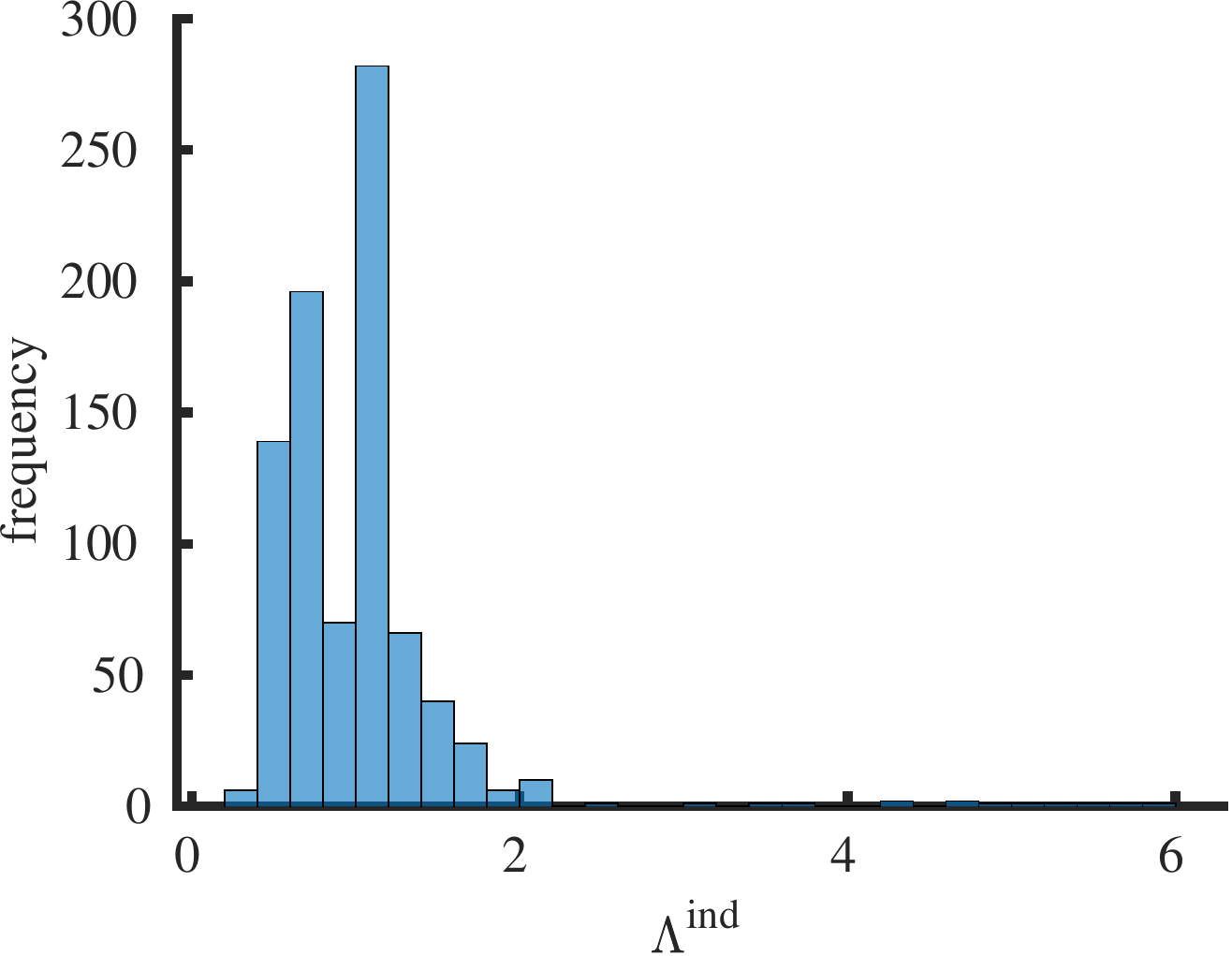}
    \quad
    \includegraphics[width=.3\textwidth]{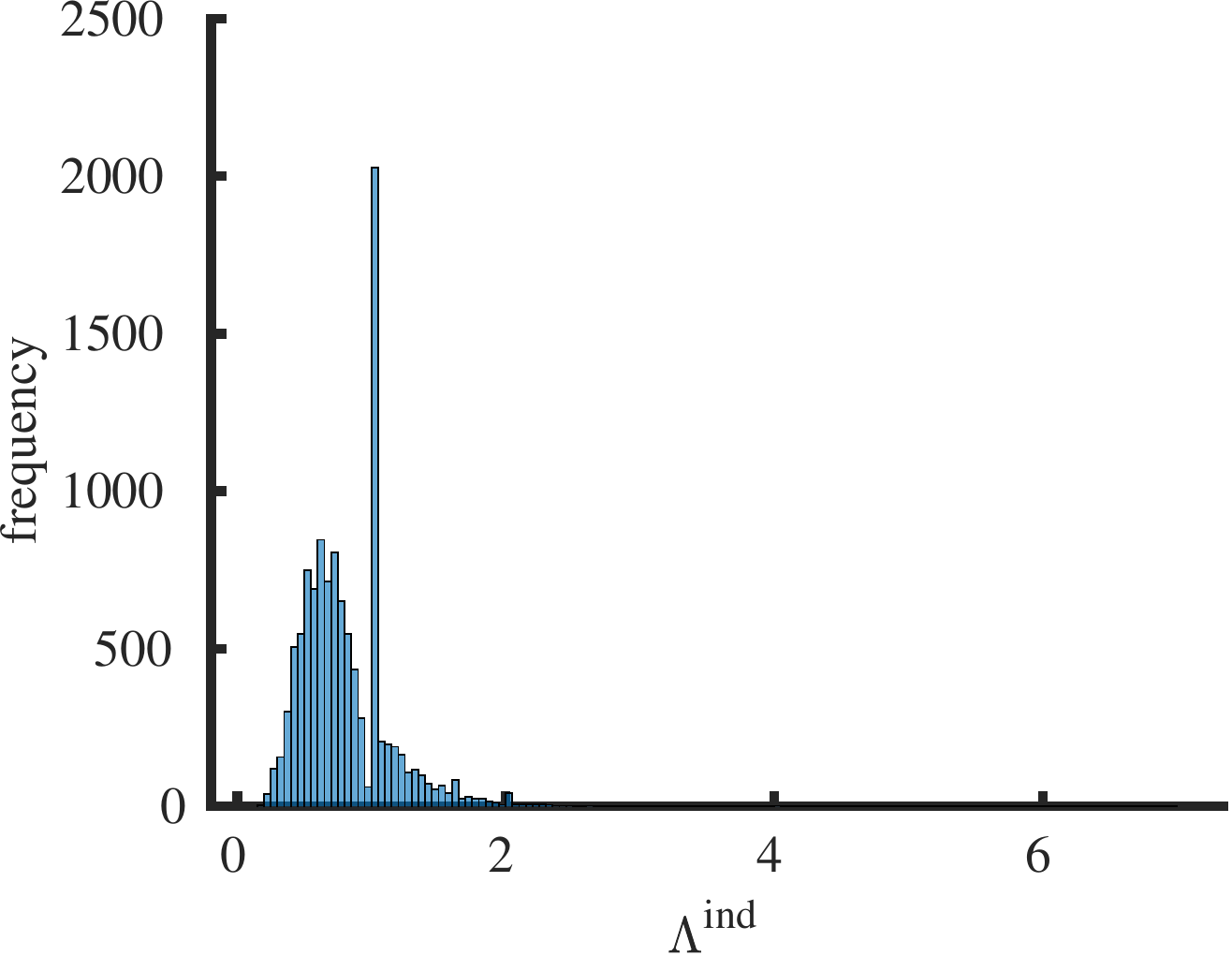}
    \quad
    \includegraphics[width=.3\textwidth]{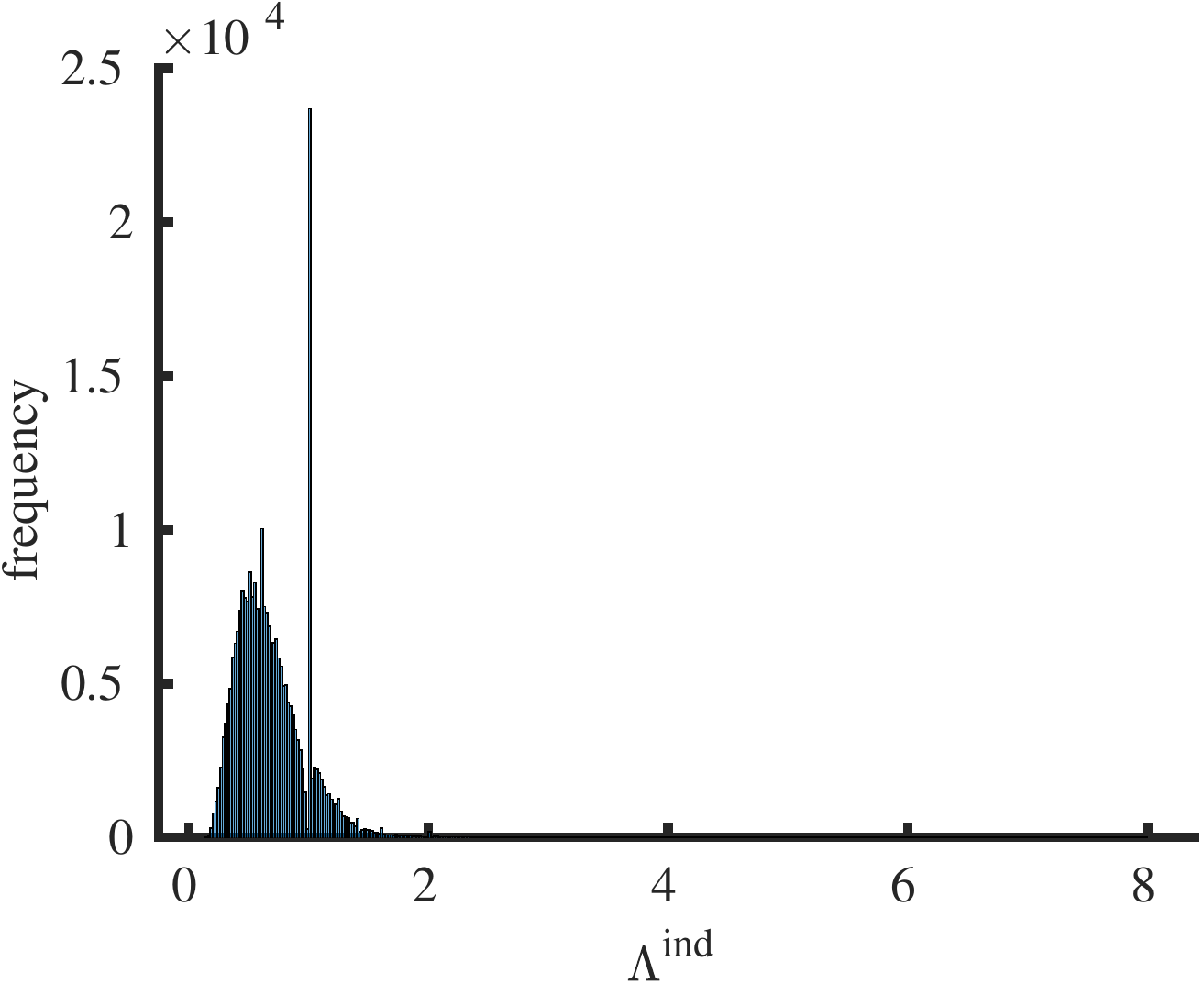}
    
    \includegraphics[width=.3\textwidth]{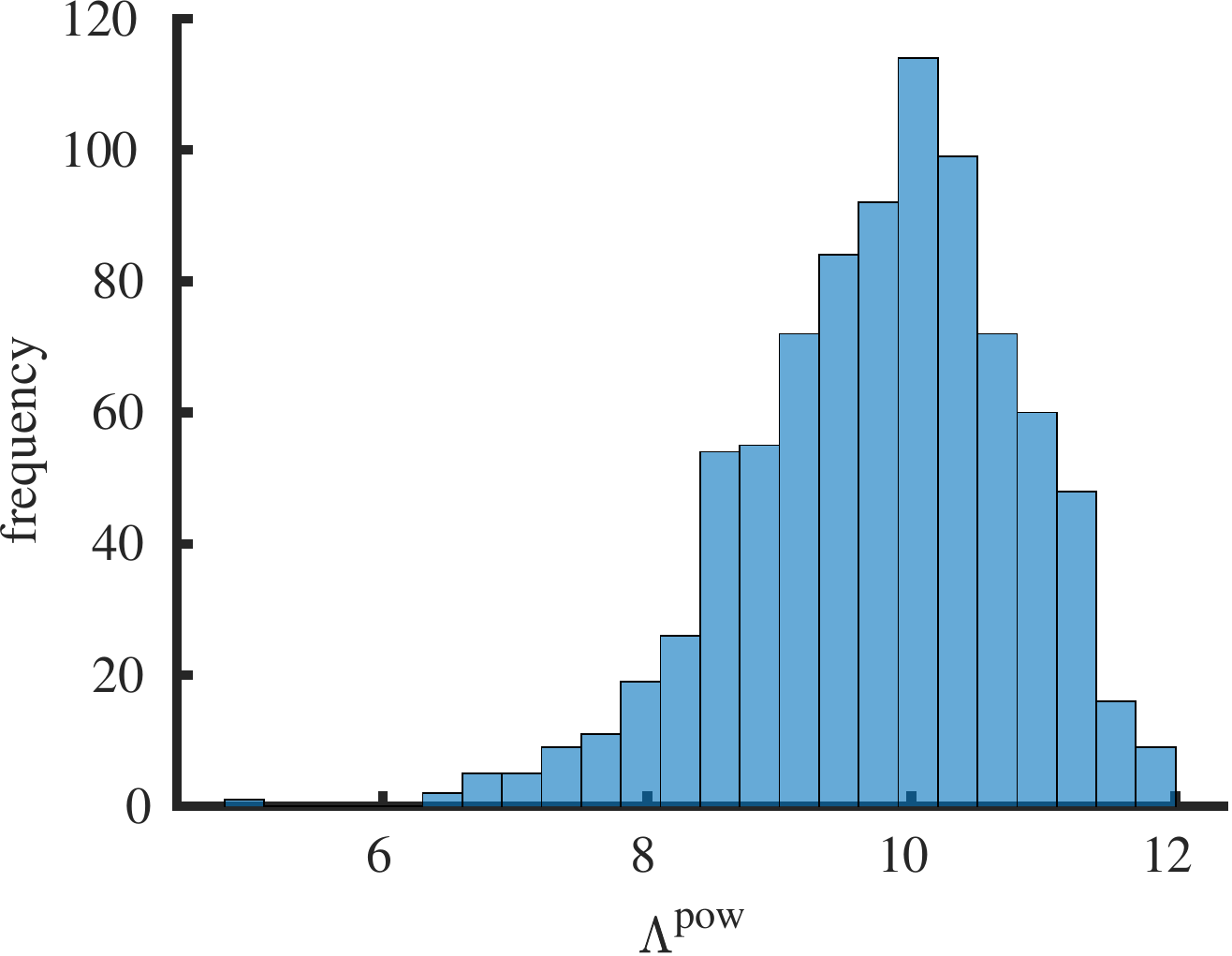}
    \quad
    \includegraphics[width=.3\textwidth]{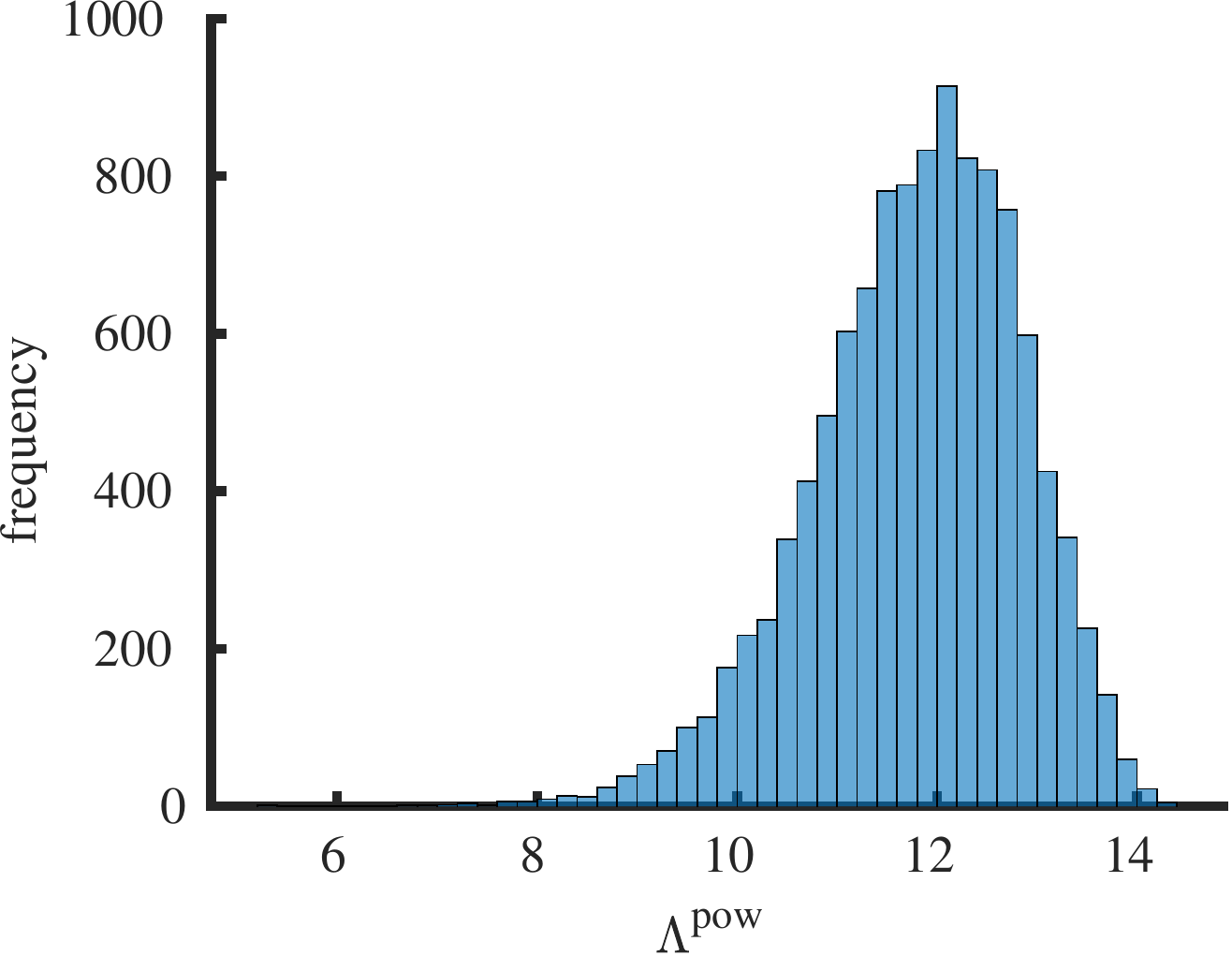}
    \quad
    \includegraphics[width=.3\textwidth]{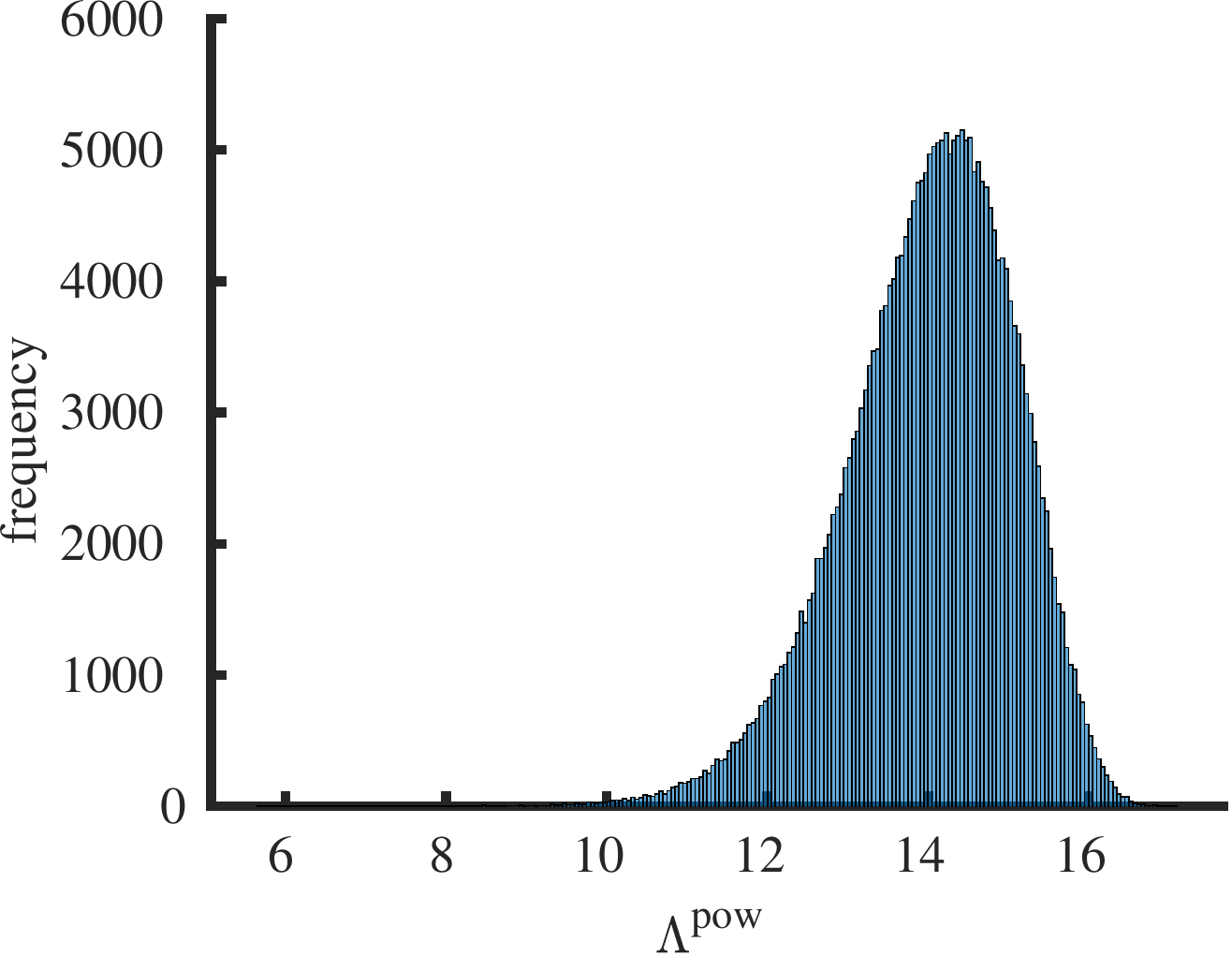}
    
{\small $m=7$ \hskip 3.5truecm $m=8$ \hskip 3.5truecm $m=9$}

    \caption{\small Histograms of distribution of $\Lambda^{gap}$ (top row), $\Lambda^{ind}$ (middle row), and $\Lambda^{pow}$ (bottom row) for all simple connected graphs on $7\le m\le 9$  vertices. For their statistical properties, see Table~\ref{tab-spektrumall}.}
    \label{fig-graf-maxlambaHL-power-7-8-9}
\end{figure}

\begin{figure}
    \centering
    
    \includegraphics[width=.25\textwidth]{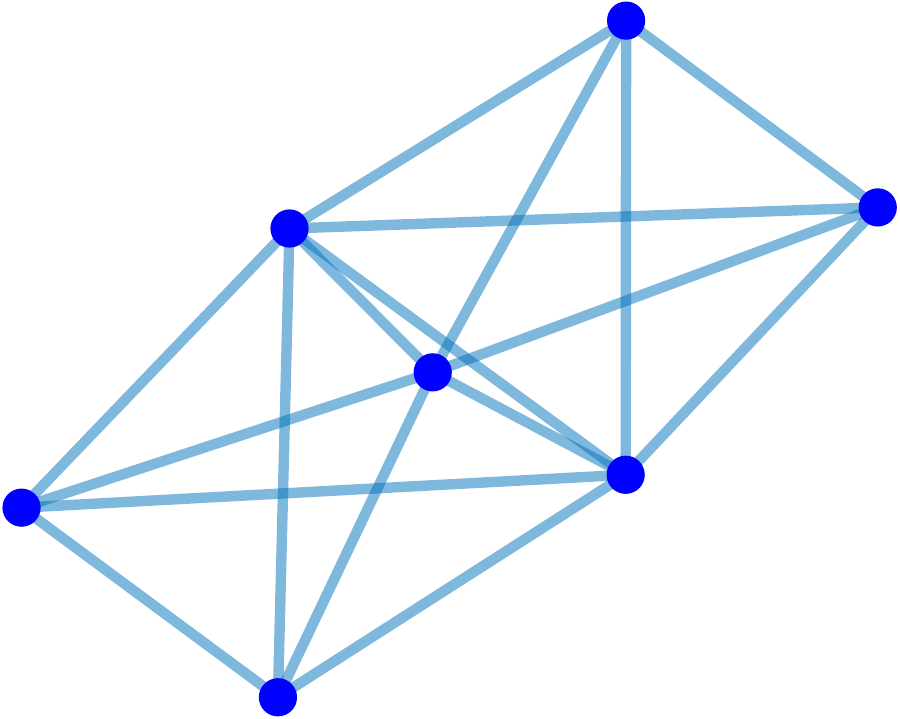}

    \caption{\small The noncomplete graph  on $m=7$  vertices with eigenvalues  $\{5, 1, -1, -1, -1, -1, -2\} $ maximizing the value $\Lambda^{pow}=12$ in the class of all simple connected graphs of the degree $m=7$.}
    
    \label{fig-graf-maxLambdaPower-2-second}
\end{figure}

\begin{figure}
    \centering
    
    \includegraphics[width=.25\textwidth]{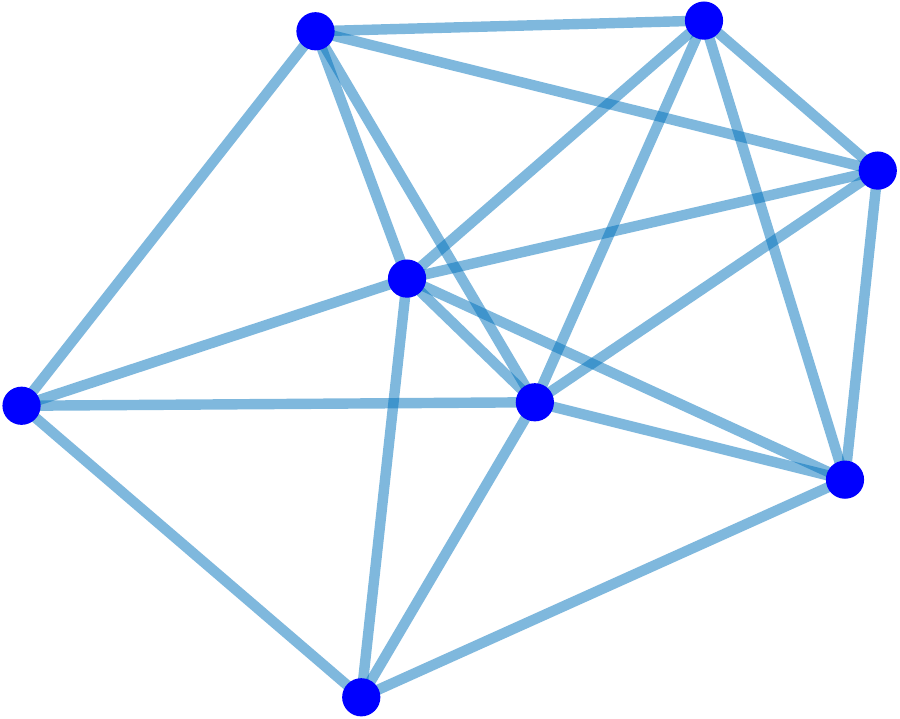}
    \quad
    \includegraphics[width=.25\textwidth]{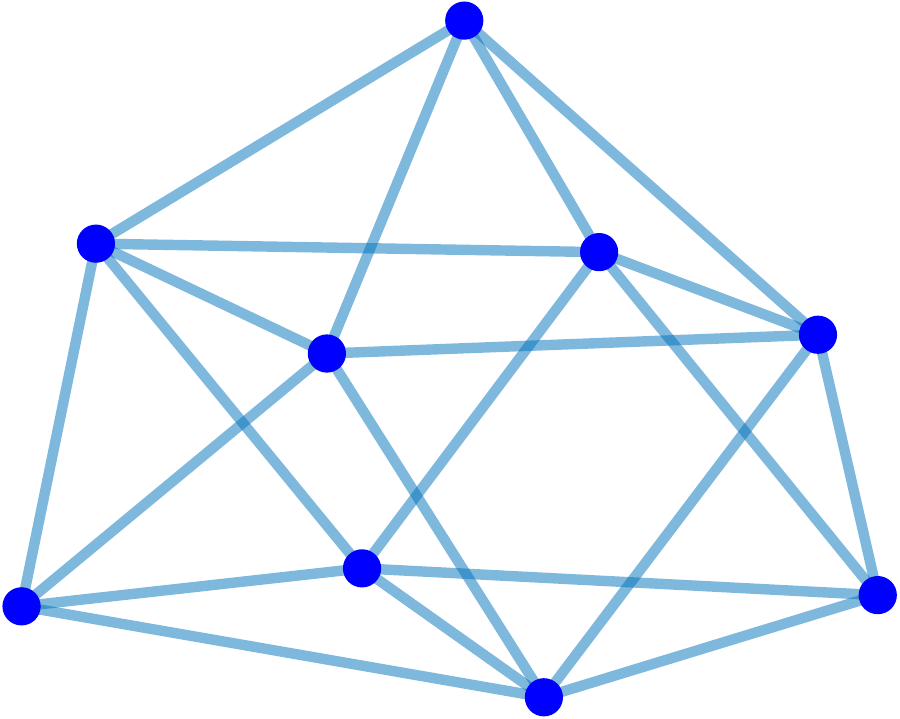}
    \quad
    \includegraphics[width=.25\textwidth]{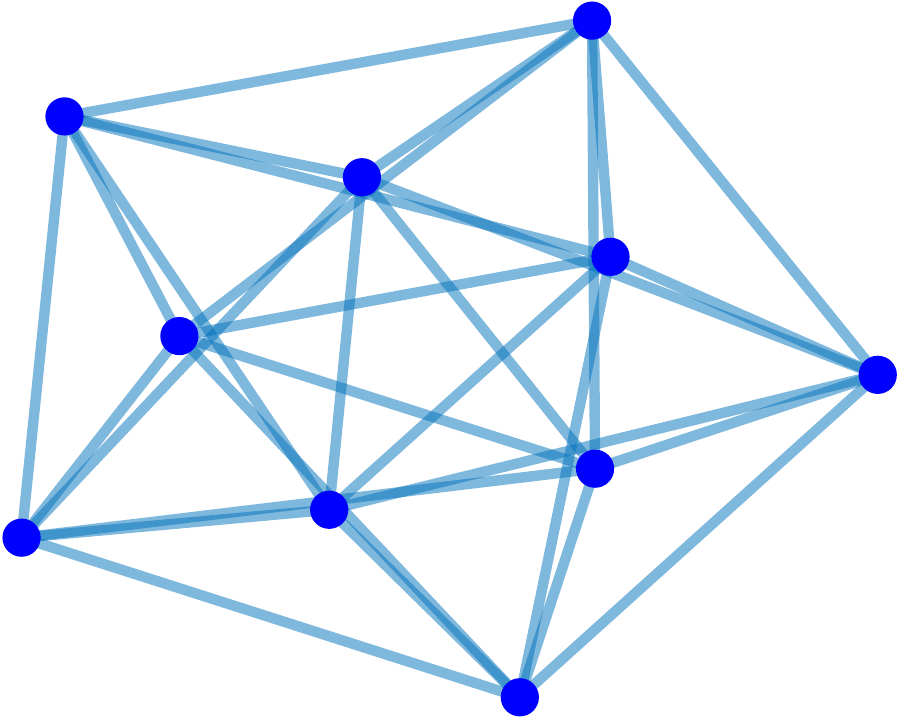}

{\small $m=8$ \hskip 3.5truecm $m=9$ \hskip 3.5truecm $m=10$}

    \caption{\small noncomplete graphs on $8\le m\le 10$  vertices maximizing $\Lambda^{pow}$ which is greater than the value $=2m -2$ attained by the complete graph $K_m$. For values of $\Lambda^{pow}$ see Table~\ref{tab-spektrumall}.}
    
    \label{fig-graf-maxLambdaPower-8-9-10}
\end{figure}

Interestingly enough, for the values of $m\le7$ the maximum value of $\Lambda^{pow}$ is achieved for the complete graph $K_m$ with the eigenvalues $\{m-1, -1, \dots, -1\}$ and the maximal value $\Lambda^{pow}=2m-2$. For $m=7$ there are exactly two graphs with the same maximal value $\Lambda^{pow}=12$. The noncomplete maximizing graph with eigenvalues $\{ 5,1,-1,-1,-1,-1,-2\}$ is shown in Fig.~\ref{fig-graf-maxLambdaPower-2-second}. Starting from the degree $m=8$ the maximal value of  $\Lambda^{pow}$  is attained for noncomplete graphs shown in Fig.~\ref{fig-graf-maxLambdaPower-8-9-10}.
In Fig.~\ref{fig-graf-minLambdaHLgap-5-6-7-8-9-10} we show graphs on $5\le m\le 10$  minimizing $\Lambda^{gap}$. Path graphs $P_m$ minimize $\Lambda^{gap}$ and $\Lambda^{ind}$ for $m=2,3,4$ (see Table~\ref{tab-spektrumall}). In Fig.~\ref{fig-graf-minLambdaHLindex-6-7-9-10} we show graphs on $m=6,7,9,10$  minimizing $\Lambda^{ind}$. For $m=5,8$ the minimizing graphs are the same as those for $\Lambda^{gap}$ shown in Fig.~\ref{fig-graf-minLambdaHLindex-6-7-9-10} (see Table~\ref{tab-spektrumall}). 

\begin{figure}
    \centering
    
    \includegraphics[width=.25\textwidth]{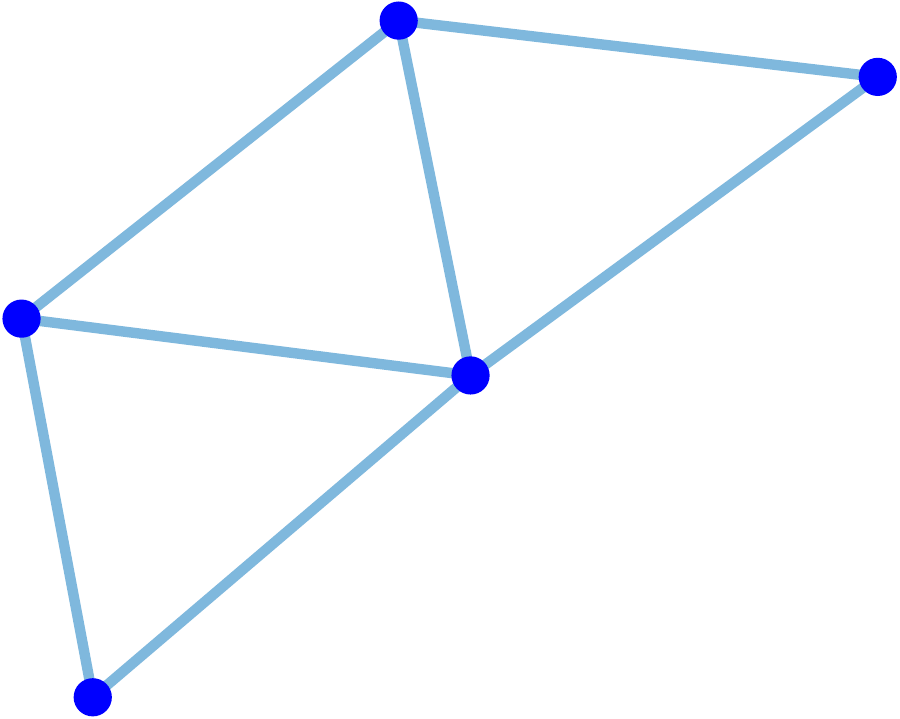}
    \quad
    \includegraphics[width=.25\textwidth]{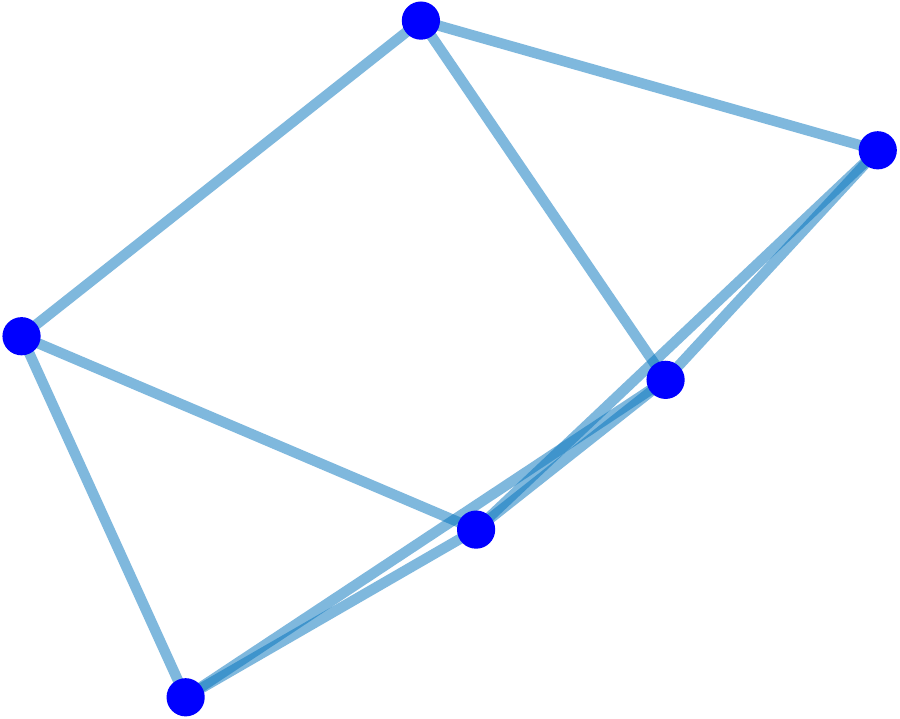}
    \quad
    \includegraphics[width=.25\textwidth]{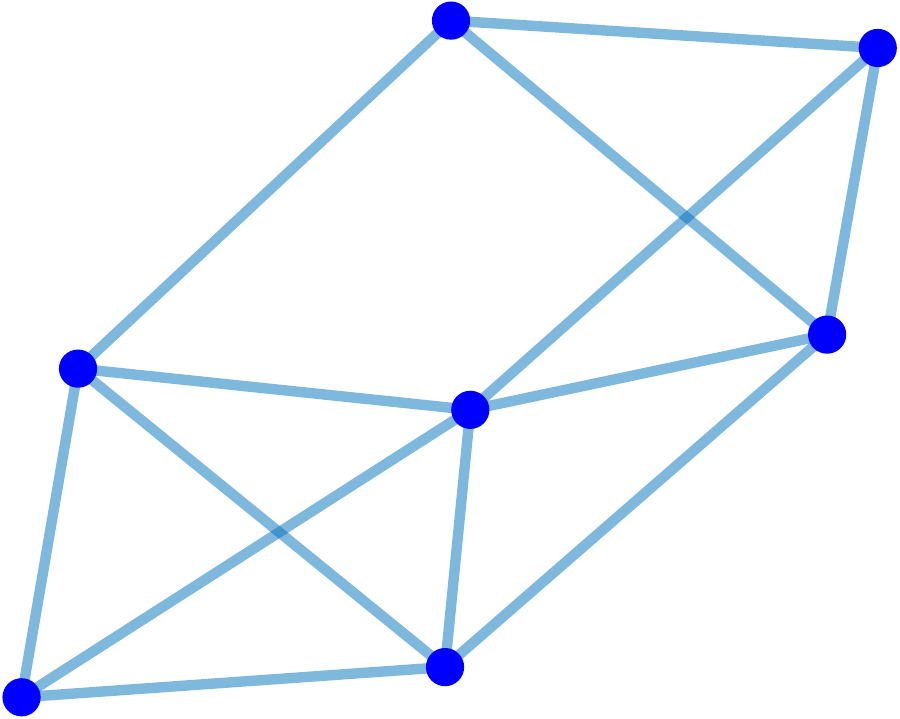}

{\small $m=5$ \hskip 3.5truecm $m=6$ \hskip 3.5truecm $m=7$}

\bigskip

\includegraphics[width=.25\textwidth]{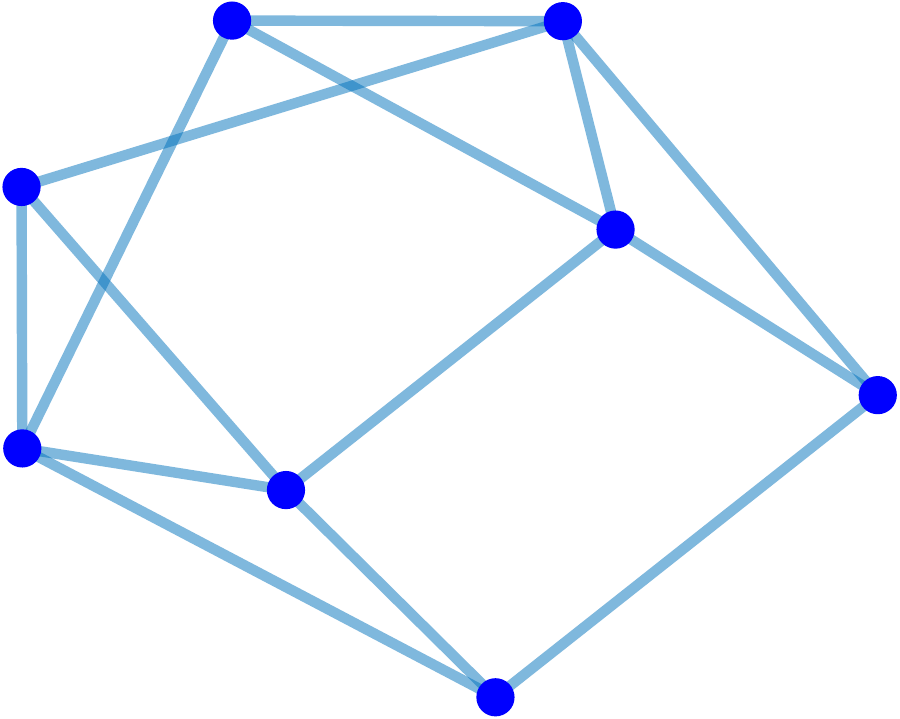}
    \quad
    \includegraphics[width=.25\textwidth]{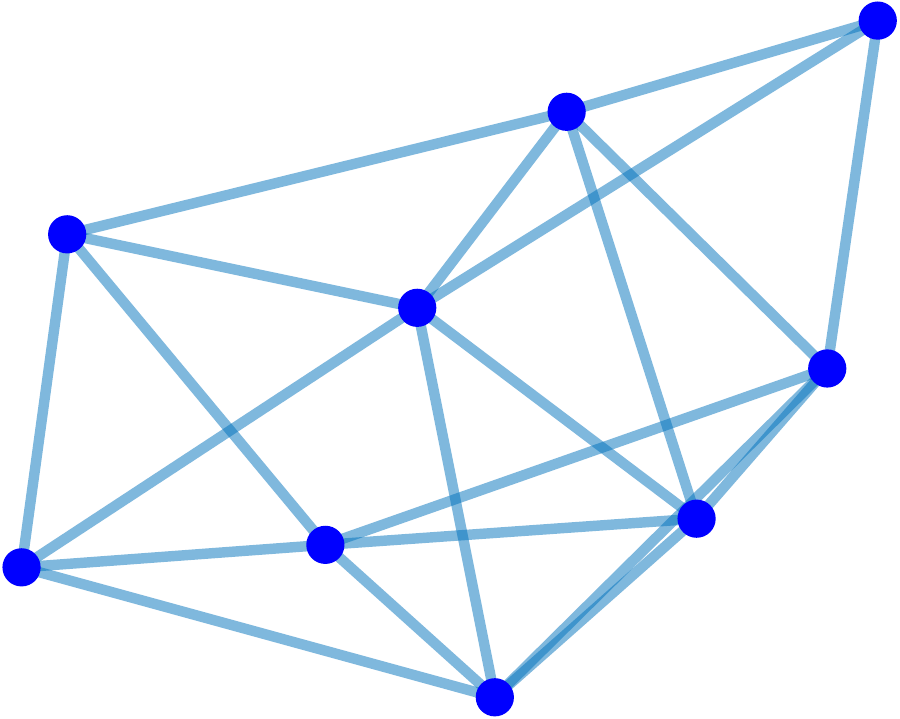}
    \quad
    \includegraphics[width=.25\textwidth]{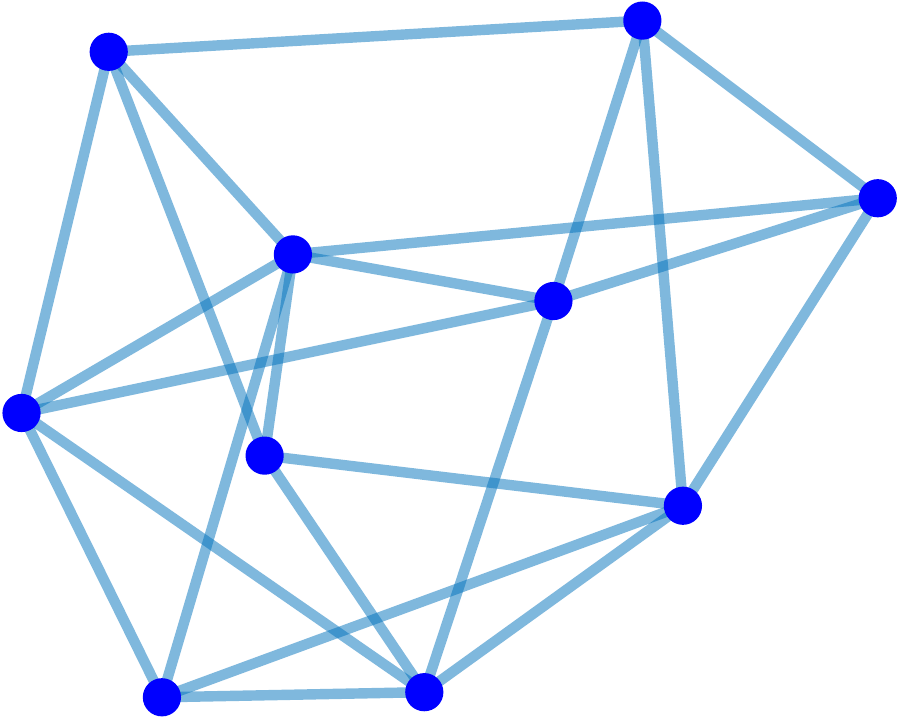}
    \quad

{\small $m=8$ \hskip 3.5truecm $m=9$ \hskip 3.5truecm $m=10$}

    \caption{\small Graphs on $5\le m\le 10$  minimizing $\Lambda^{gap}$. For values of $\Lambda^{pow}$ see Table~\ref{tab-spektrumall}.}
    
    \label{fig-graf-minLambdaHLgap-5-6-7-8-9-10}
\end{figure}

\begin{remark}
According to Caporossi {et al.} \cite[Theorem 2]{Cap}, for a general simple connected graph $G_A$ we have $\Lambda^{pow}(G_A) \ge 2\sqrt{m-1}$. The unique minimal value of $\Lambda^{pow}=2\sqrt{m-1}$ is attained by the star graph $S_m\equiv K_{m-1,1}$. For related results, we refer to Stanic \cite[(2.11), p. 33]{Sta} and McClelland \cite{McCle}.

\end{remark}

\begin{figure}
    \centering
    
 \hglue -1truecm   \includegraphics[width=.24\textwidth]{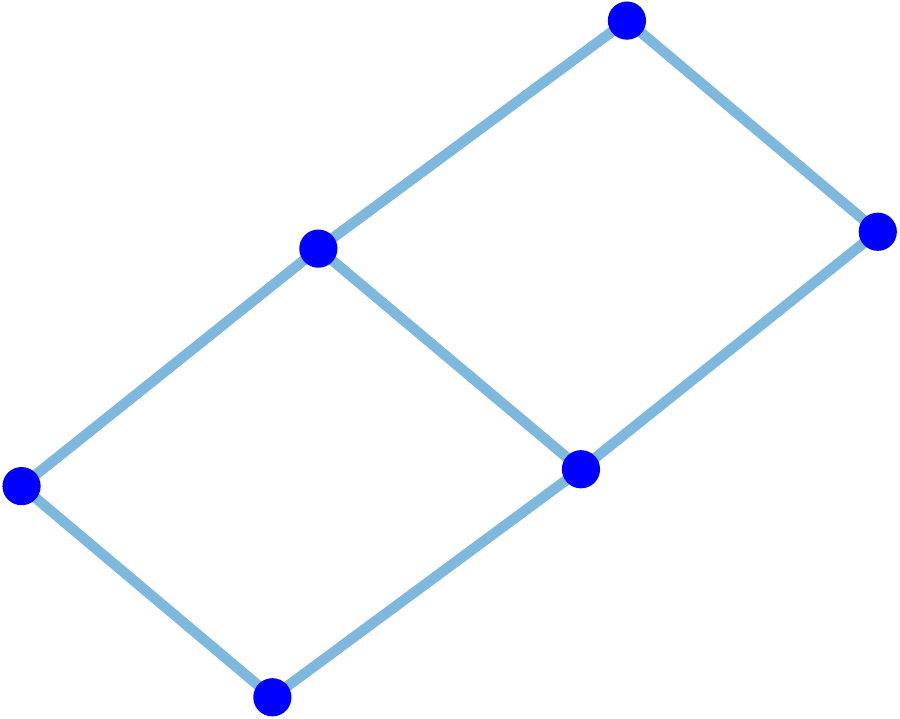}
    \includegraphics[width=.24\textwidth]{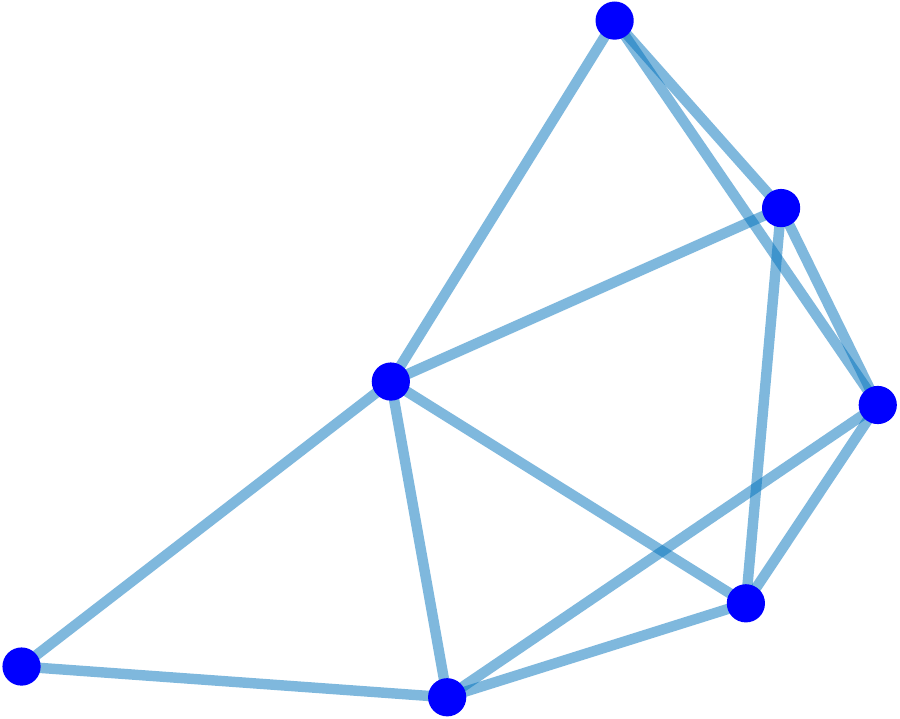}
    \includegraphics[width=.25\textwidth]{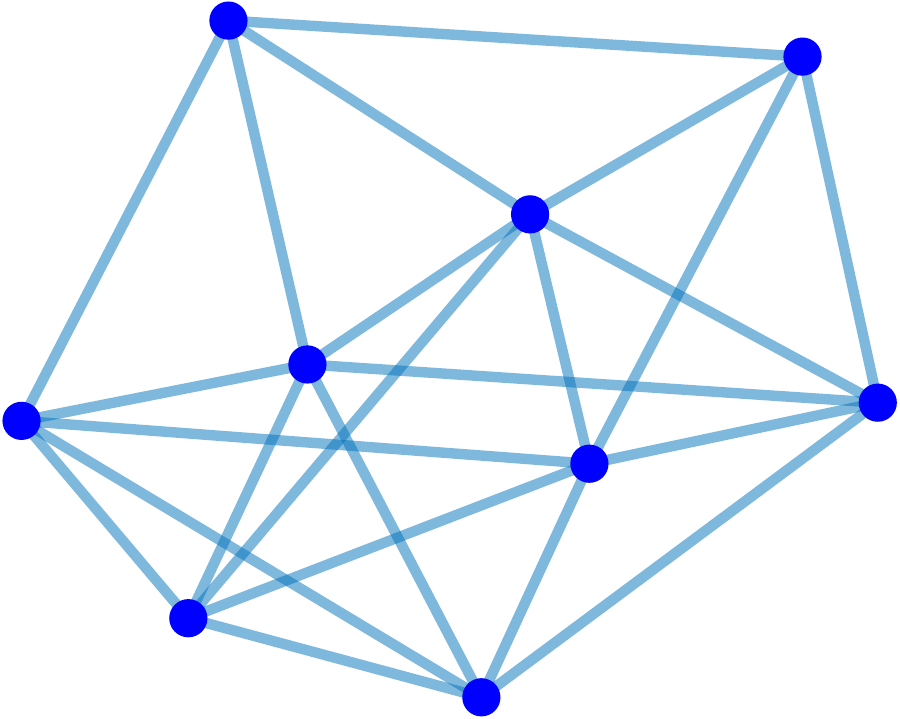}
    \includegraphics[width=.25\textwidth]{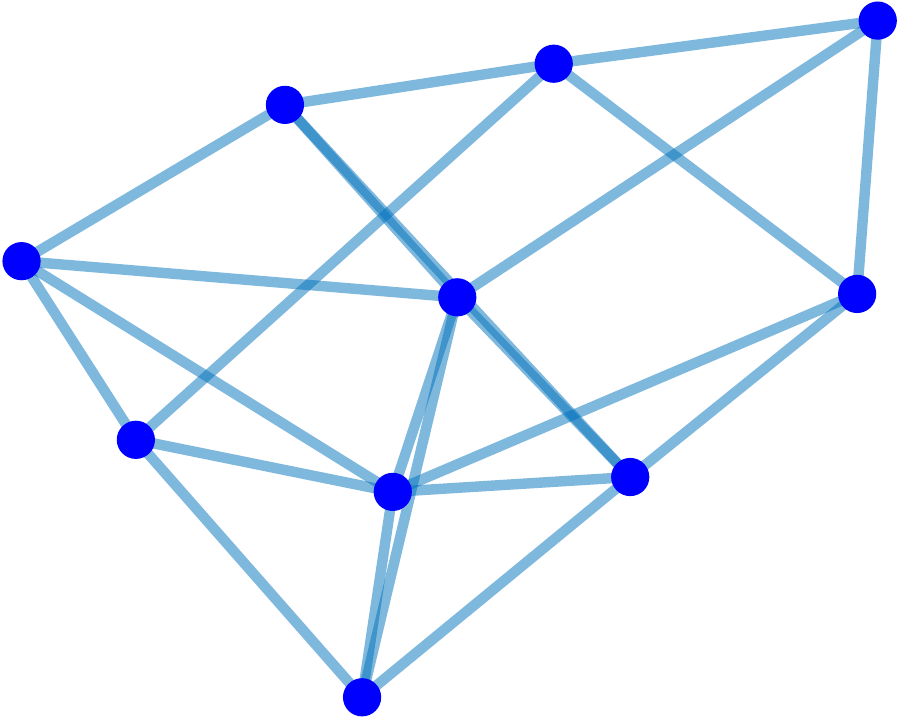}

{\small $m=6$ \hskip 2.5truecm $m=7$ \hskip 2.5truecm  $m=9$ \hskip 2.5truecm $m=10$}

\bigskip

    \caption{\small Graphs on $5\le m\le 10$  minimizing $\Lambda^{ind}$. For values of $\Lambda^{pow}$ see Table~\ref{tab-spektrumall}.}
    
    \label{fig-graf-minLambdaHLindex-6-7-9-10}
\end{figure}

\section{Conclusions}
In this paper we analyzed the spectral properties of all simple connected graphs. We focus our attention to the class of graphs which are complete multipartite graphs. We also present results on density of spectral gap indices and its nonpersistency with respect to small perturbations of the underlying graph. We also analyzed the spectral properties of graphs different from those of complete multipartite graphs. We presented statistical and numerical analysis of the  indices $\Lambda^{gap}, \Lambda^{ind}$, and $\Lambda^{pow}$ of graphs of order $m\le 10$. 

\section*{Acknowledgments}
Support of the Slovak Research and Development Agency under the projects APVV-19-0308 (SP, JS), and APVV-20-0311 (DS) is kindly acknowledged.


\begin{thebibliography}{9}


\bibitem{Aihara1999JCP}
\newblock J. I. Aihara,
\newblock Reduced HOMO-LUMO Gap as an Index of Kinetic Stability for Polycyclic Aromatic Hydrocarbons,
\newblock {J. Phys. Chem. A}, \textbf{103} (1999), 7487--7495.

\bibitem{Aihara1999TCH}
\newblock J. I. Aihara,
\newblock Weighted HOMO-LUMO energy separation as an index of kinetic stability for fullerenes,
\newblock {Theor. Chem. Acta}, \textbf{102} (1999), 134--138.


\bibitem{Bacalis2009}
\newblock N. C. Bacalis and A. D. Zdetsis,
\newblock Properties of hydrogen terminated silicon nanocrystals via a transferable tight-binding Hamiltonian, based on ab-initio results,
\newblock {J. Math. Chem.}, \textbf{26} (2009), 962--970.


\bibitem{Bapat} R. B. Bapat, and E. Ghorbani, Inverses of triangular matrices and bipartite graphs, Linear Algebra and its Applications 447 (2014), 68-73.

\bibitem{Bapat2010}
R. B. Bapat, Graphs and matrices. Universitext. Springer, London; Hindustan Book Agency, New Delhi, 2010, 171 pp.

\bibitem{Bender1990}
\newblock  E. A. Bender, E. R. Canfield, and B. D. McKay, 
\newblock The asymptotic number of labeled connected graphs with a given number of vertices and edges, 
\newblock {Random Structures and Algorithms}, \textbf{1} (1990),  127-169.





\bibitem{Ben}
\newblock A. Ben-Israel, T.N.E. Greville, Generalized Inverses: Theory and Applications, CMS Books Math., Springer, 2003.

\bibitem{Brouwer2012} Brouwer, A., Haemers, W., Spectra of graphs. Universitext. Springer, New York, 2012. xiv+250 pp. ISBN: 978-1-4614-1938-9 



\bibitem{Cap}
G. Caporossi, D. Cvetkovi\'c , I. Gutman, P. Hansen, 
Variable neighborhood search for extremal graphs.2. Finding graphs with extremal energy, 
J. Chem. Inf. Comput. Sci. 39 (1999) 984–996.

\bibitem{Con} G. Constantine, Lower bounds on the spectra of symmetric matrices with non-negative entries,
Linear Algebra Appl. 65 (1965), 171--178.  


\bibitem{Cvetkovic1988} D. Cvetkovi\'c, M. Doob, and H. Sachs, \textit{Spectra of graphs - Theory and application}, Deutscher Verlag der Wissenchaften, Berlin, 1980; Academic Press, New York, 1980.


\bibitem{Cvetkovic2004} 
\newblock D. Cvetkovi\'c, P. Hansen and V. Kova\v{c}evi\v{c}-Vu\v{c}i\v{c},
\newblock On some interconnections between combinatorial optimization and extreme graph theory,
\newblock {Yugoslav Journal of Operations Research}, \textbf{14} (2004), 147--154.


\bibitem{Cvetkovic1995} D. Cvetkovi\'c, and S. Simi\'c, 
The second largest eigenvalue of a graph (a survey). Filomat \textbf{9} (1995), 449-472.

\bibitem{CvDS} D. Cvetkovi\'{c}, M. Doob and H. Sachs, Spectra of graphs - Theory and application, 3rd Ed.,
Heidelberg-Leipzig, 1995.

\bibitem{Del} C. Delorme, Eigenvalues of complete multipartite graphs, Discrete Math. 312 (2012), 2532--2535.

\bibitem{ElMc} N. D. Elkies and C. T. McMullen. Gaps in $\sqrt{n}$ mod 1 and ergodic theory, Duke Math. J.  123 (2004) 1, 95--139.

\bibitem{EsHa} F. Esser and F. Harary, On the spectrum of a complete multipartite graph, Europ. J. Combin. 1 (1980), 211--218.

\bibitem{FoPi} P. Fowler and T. Pisanski, HOMO-LUMO maps for chemical graphs,  MATCH 64 (2010), 373--390. 

\bibitem{Fowler2001}
\newblock P. W. Fowler, P. Hansen, G. Caporosi and A. Soncini,
\newblock  {Polyenes with maximum HOMO-LUMO gap},
\newblock {Chemical Physics Letters}, \textbf{342} (2001), 105--112.

\bibitem{Fowler2010}
\newblock P. V. Fowler,
\newblock  {HOMO-LUMO maps for chemical graphs},
\newblock {MATCH Commun. Math. Comput. Chem.}, \textbf{64} (2010), 373--390.

\bibitem{Godsil1985} C. D. Godsil, Inverses of Trees, Combinatorica 5 (1985), 33--39.

\bibitem{Gutman1979}
\newblock I. Gutman and D.H. Rouvray,
\newblock  {An Aproximate TopologicaI Formula for the HOMO-LUMO Separation in Alternant Hydrocarboons},
\newblock {Chemical-Physic Letters}, \textbf{72} (1979), 384--388.

\bibitem{Har} F. Harary, and H. Minc, Which non-negative matrices are self-inverse? Math. Mag. (Math. Assoc. of America) 49 (1976) 2, 91--92.

\bibitem{Hon} Y. Hong, Bound of eigenvalues of a graph, Acta Math. Appl. Sinica 432 (1988), 165--168.

\bibitem{Huckel1931}
\newblock E. H\"uckel,
\newblock Quantentheoretische Beitr\"age zum Benzolproblem,
\newblock {Zeitschrift f\"ur Physik}, \textbf{30} (1931), 204--286.

\bibitem{Jaklic2012}
\newblock G. Jakli\'{c},
\newblock  {HL-index of a graph},
\newblock {Ars Mathematica Contemporanea}, \textbf{5} (2012), 99--105.


\bibitem{KirklandAkb2007} S. J. Kirkland, and S. Akbari, On unimodular graphs, Linear Algebra and its Applications 421 (2007), 3--15.

\bibitem{KirklandTif2009} S. J. Kirkland, and R. M. Tifenbach, Directed intervals and the dual of a graph,
Linear Algebra and its Applications 431 (2009), 792--807.


\bibitem{Lin2016}
\newblock Lin Chen and Jinfeng Liu,
\newblock  {extreme values of matching energies of one class of graphs},
\newblock {Applied Mathematics and Computation}, \textbf{273} (2016), 976--992.


\bibitem{Li2013}
\newblock Xueliang Li, Yiyang Li, Yongtang Shi and I. Gutman,
\newblock  {Note on the HOMO-LUMO index of graphs},
\newblock {MATCH Commun. Math. Comput. Chem.}, \textbf{70} (2013), 85--96.


\bibitem{LLSG} X. Li, Y. Li, Y. Shi and I. Gutman, Note on the HOMO-LUMO index of graphs, MATCH 70 (2013), 85--96


\bibitem{McCle}
B. J. McClelland,  Properties of the latent roots of a matrix: The estimation of $\pi$-electron energies. 
J. Chem. Phys., 54 (1971), 640–643.


\bibitem{McKay}
\newblock B. McKay,
\newblock Combinatorial Data, 
\newblock Available online Nov/2022 \\ http://users.cecs.anu.edu.au/~bdm/data/graphs.html


\bibitem{Mohar2013} 
\newblock B. Mohar,
\newblock  {Median eigenvalues of bipartite planar graphs},
\newblock {MATCH Commun. Math. Comput. Chem.} \textbf{70} (2013), 79--84.

\bibitem{Mohar2015} 
\newblock M. Mohar,
\newblock  {Median eigenvalues and the HOMO-LUMO index of graphs},
\newblock {Journal of Combinatorial Theory, Series B}, \textbf{112} (2015), 78--92.


\bibitem{Pavlikova2016}
\newblock S. Pavl\'{\i}kov\'{a}, and D. \v{S}ev\v{c}ovi\v{c},
\newblock On a construction of integrally invertible graphs and their spectral properties,
\newblock {Linear Algebra and its Applications}, \textbf{532} (2017), 512--533.

\bibitem{Pavlikova2022-LAA}
\newblock S. Pavl\'{\i}kov\'{a}, and D. \v{S}ev\v{c}ovi\v{c},
\newblock  On the Moore-Penrose pseudo-inversion of block symmetric matrices and its application in the graph theory, 
\newblock {Linear Algebra and its Applications}, \textbf{673} (2023), 280-303.

\bibitem{Pow} D. L. Powers, Graph partitioning by eigenvectors, Linear Algebra Appl. 101 (1988), 121--133.

\bibitem{Powers1987} D. Powers: 
Structure of a matrix according to its second eigenvector. 
Current trends in matrix theory, Proc. 3rd Conf., Auburn/Ala. 1986, 261-265 (1987). 

\bibitem{Powers1989} D. Powers: 
Bounds on graph eigenvalues.
Linear Algebra Appl. 117, 1-6 (1989). 


\bibitem{Sta} 
Z. Stani\v{c}, 
Inequalities for Graph Eigenvalues, LMS Lect. Notes Ser. 423, Cambridge Univ. Press, 2015.

\bibitem{Streitwieser1961}
\newblock Streitwieser, A., 
\newblock Molecular orbital theory for organic chemists, 
\newblock John Willey \& Sons, New York-London, 1961. 



\bibitem{Ye} D. Ye, Y. Yang, B. Manda, and D. J. Klein, Graph invertibility and median eigenvalues,
Linear Algebra and its Applications, 513(15) (2017), 304--323.

\bibitem{Zas} 
\newblock T. Zaslavsky, 
\newblock Signed graphs, 
\newblock {Discrete Applied Math.}, \textbf{4} (1982), 47--74.



\bibitem{Zhang2002}
\newblock F. Zhang and Z. Chen,
\newblock  {Ordering graphs with small index and its application},
\newblock {Discrete Applied Mathematics}, \textbf{121} (2002), 295--306.


\end{thebibliography}
\end{document}